\newtheorem{thm}{Theorem}
\newtheorem{lma}{Lemma}
\newtheorem{obs}[lma]{Observation}
\newtheorem{crly}[thm]{Corollary}
\newtheorem*{ntn}{Notation}
\begin{document}
\title{On the Mixing Set with a Knapsack Constraint}
\author{Ahmad Abdi and Ricardo Fukasawa\\
Department of Combinatorics and Optimization†\\
University of Waterloo\\
\texttt{\{a3abdi,rfukasaw\}@math.uwaterloo.ca}}
\date{\today}
\maketitle

\begin{abstract} 
The mixing set with a knapsack constraint arises as a substructure in mixed-integer programming 
reformulations of chance-constrained programs with stochastic right-hand-sides over a finite discrete distribution.
Recently, Luedtke et al. (2010) and K\"u\c{c}\"ukyavuz (2012) studied valid inequalities for such sets. 
However, most of their results were focused on the equal probabilities case (equivalently when the knapsack reduces to a cardinality constraint), with only minor results in the general case. In this paper, we focus on the general probabilities case (general knapsack constraint). We characterize the valid inequalities that do not come 
from the knapsack polytope and use this characterization to generalize the inequalities previously derived
for the equal probabilities case. We also show that one can separate over a large class of inequalities in polynomial time.
\end{abstract}

\section{Introduction}\label{intro}

Many optimization problems in real world applications allow to some extent a number of violated constraints, which results in a decrease in the quality of service (QoS) and also a decrease in the cost of production. These optimization problems have been a main motive to study probabilistic (in particular, chance-constrained) programming. A difficulty when dealing with these optimization problems is that the feasible region is not necessarily convex. In this paper, we consider mixed-integer programming (MIP) reformulations of chance-constrained programs with joint probabilistic constraints in which the right-hand-side vector is random with a finite discrete distribution. This model was first proposed in Sen \cite{sen1992}, studied in Ruszczy\'{n}ski \cite{ruszczynski2002}, and extended by Luedtke et. al \cite{luedtke2010} and K\"{u}\c{c}\"{u}kyavuz \cite{kucukyavuz2012}. This reformulation gives rise to a mixing-type set \cite{gunluk2001} subject to an additional knapsack constraint, which is the focus of this paper.

Formally, consider the following chanced-constrained programming problem \[\begin{array}{ccl}$(PLP)$ &\min & c^T x \\
&{\rm s.t.} & {\bf P}(f(x) \geq \xi) \geq 1-\epsilon \\ & & x \in X, \end{array} \] where $X\subset \mathbb{R}^n$ is a polyhedron, $f:X\to \mathbb{R}^d_+$ is a linear function, $\xi$ is a random variable in $\mathbb{R}^d$ with finite discrete distribution, $\epsilon \in (0,1)$, and $c\in \mathbb{R}^n$. Suppose that $\xi$ takes values from $\xi_1,\ldots,\xi_n$ with probabilities $\pi_1,\ldots,\pi_n$, respectively. We may assume that $\xi_j\geq 0$ for all $j\in [n]:=\{1,\ldots,n\}$. (Otherwise, replace $\xi_j$ by $\xi_j-\xi'$ and reset $f(x):=f(x)-\xi'$, where $\xi'$ is chosen so that $\xi_j\geq \xi'$ for all $j\in [n]$.) Also, by definition, $\pi_j>0$ for each $j\in [n]$ and $\sum_{j=1}^{n}{\pi_j}=1$. We can reformulate the chance constraint in (PLP) using linear inequalities and auxiliary binary variables as follows: let $z\in \{0,1\}^n$ where $z_j=0$ guarantees that $f(x)\geq \xi_j$. Then (PLP) is equivalent to \[ \begin{array}{ccl}$(PLP)$ &\min & c^T x \\
&{\rm s.t.} & y=f(x) \\ & & y+z_j\xi_j\geq \xi_j ~~~~ \forall j\in [n]\\ & & \sum_{j=1}^n\pi_jz_j\leq \epsilon \\ & & z\in \{0,1\}^n \\ & & x \in X. \end{array} \] Observe that we may assume that $\pi_j\leq \epsilon$ for all $j\in [n]$, for if $\pi_j>\epsilon$ for some $j\in [n]$, then we must have $z_j=0$ for all feasible solutions $(x,y,z)$ to the above system, and so may as well drop the index $j$. Now let $$\mathcal{D}:=\left\{(y,z)\in \mathbb{R}^d_+ \times \{0,1\}^n:\sum_{j=1}^n{\pi_jz_j}\leq \epsilon, ~y+\xi_jz_j\geq \xi_j ~ \forall j\in [n]\right\}.$$ Then (PLP) can be rewritten as  \[ \begin{array}{ccl}$(PLP)$ &\min & c^T x \\
&{\rm s.t.} & f(x)\in \text{proj}_y \mathcal{D}\\ & & x \in X. \end{array}\] This motivates us to study the set $\mathcal{D}$. For each $k\in [d]$, let $$\mathcal{D}_k:=\left\{(y_k,z)\in \mathbb{R}_+ \times \{0,1\}^n:\sum_{j=1}^n{\pi_jz_j}\leq \epsilon, ~y_k+\xi_{jk}z_j\geq \xi_{jk} ~ \forall j\in [n]\right\}.$$ Then observe that $$\mathcal{D}=\bigcap_{k\in [d]}\left\{(y,z)\in  \mathbb{R}^d_+ \times \{0,1\}^n: (y_k,z)\in \mathcal{D}_k\right\}.$$ Therefore, in order to study the set $\mathcal{D}$, a first step is to study the lower dimensional sets $\mathcal{D}_k$.

Fix $k\in [d]$ and for notational convenience, let $h_j:=\xi_{jk}$ for each $j\in [n]$. Let $\sum_{j\in [n]}{a_j z_j}\leq p$ be a valid inequality for $\mathcal{D}_k$ where $a\in \mathbb{R}_+^{n}$, $p\in \mathbb{R}_+$, $a_j\leq p$ for all $j\in [n]$, and $\sum_{j\in [n]}a_j>p$. Observe that this inequality may be the knapsack constraint $\sum_{j=1}^n{\pi_jz_j}\leq \epsilon$. Now let $$\mathcal{Q}:=\left\{(y,z)\in \mathbb{R}_+ \times \{0,1\}^n:\sum_{j\in [n]}{a_jz_j}\leq p, ~y+h_jz_j\geq h_j ~ \forall j\in [n]\right\}.$$ Note that the assumption that $a_j\leq p$ for all $j\in [n]$ implies that $\mathcal{Q}$ is a full-dimensional set. (The points $(h_1+1,0), (h_1,e_1),\ldots,(h_1,e_n)$ are in $\mathcal{Q}$, where $e_j$ is the $j$-th $n$-dimensional unit vector.) Also, note that the assumption that $\sum_{j\in [n]}a_j>p$ implies that $y\geq h_n\geq 0$ for all $y\in \mathcal{Q}$. Observe that the set $\mathcal{Q}$ contains as a substructure the intersection of a mixing set, first introduced by G\"{u}nl\"{u}k and Pochet \cite{gunluk2001}, and a knapsack constraint. Various structural properties of conv$(\mathcal{Q})$ were studied in \cite{luedtke2010} and \cite{kucukyavuz2012} when the knapsack constraint $\sum_{j\in [n]}a_jz_j\leq p$ is just a cardinality constraint. In \cite{luedtke2010}, a characterization of all valid inequalities of conv$(\mathcal{Q})$ was given, and in both \cite{luedtke2010} and \cite{kucukyavuz2012}, explicit classes of facet-defining inequalities were introduced. \\

\noindent Outline of Our Work

\noindent In this paper, we do not make any assumptions on the knapsack constraint. In Sect$.$ \ref{char}, we characterize the set of all valid inequalities for conv$(\mathcal{Q})$, and give a general cutting plane generating algorithm. In Sect$.$ \ref{fdisection}, we give necessary conditions for facet-defining inequalities of conv$(\mathcal{Q})$. In Sect$.$ \ref{explicitfdisub}, we introduce an explicit class of facet-defining inequalities that subsumes the facet-defining inequalities found in \cite{luedtke2010} and \cite{kucukyavuz2012}. Finally, in Sect$.$ \ref{heuristicsep}, using our ideas regarding characterization of all valid inequalities, we introduce a polynomial time heuristic separation algorithm for conv$(\mathcal{Q})$.

\section{The Coefficient Polyhedron $\mathcal{G}$}\label{char}

Let $$\mathcal{P}:=\left\{z\in \{0,1\}^{{n}}:\sum_{j\in [n]}a_jz_j\leq p \right\}.$$ Observe that $\mathcal{P}=\text{proj}_z\mathcal{Q}$ and $\text{conv}(\mathcal{P})=\text{proj}_z (\text{conv}(\mathcal{Q}))$. The focus of this paper is to study the class of valid inequalities for $\text{conv}(\mathcal{Q})$ that do not arise from $\text{conv}(\mathcal{P})$. We will first show that any such inequality has a particular form.

\begin{lma}\label{genform} Suppose that \begin{eqnarray} \gamma y+\sum_{j\in [n]}\alpha_jz_j\geq \beta\label{gvdi} \end{eqnarray} is a valid inequality for $\emph{conv}(\mathcal{Q})$ for some $\alpha\in \mathbb{R}^n, \gamma,\beta\in \mathbb{R}$. Then $\gamma\geq 0$. Moreover, if $\gamma=0$ then $(\ref{gvdi})$ is a valid inequality for $\emph{conv}(\mathcal{P})$. \end{lma}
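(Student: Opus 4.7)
The proof has two independent parts and both rely on the same basic observation: starting from any $z$ in $\mathcal{P}$, one can always lift it to a point of $\mathcal{Q}$ by choosing $y$ large, and in fact $y$ can be made arbitrarily large.

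\emph{Showing $\gamma \geq 0$.} First I would pick any concrete point of $\mathcal{Q}$, for instance $(\max_j h_j,\,0)$; this lies in $\mathcal{Q}$ since the mixing constraints reduce to $y \geq h_j$ for every $j$ and the knapsack constraint holds trivially at $z=0$ (note $p\geq 0$). Then I would observe that for any $t \geq 0$, the point $(\max_j h_j + t,\,0)$ is also in $\mathcal{Q}$: the $y$-coordinate can be increased without bound with $z$ held fixed. Plugging this family of points into the inequality $\gamma y + \sum_j \alpha_j z_j \geq \beta$ yields $\gamma(\max_j h_j + t) \geq \beta$ for all $t \geq 0$, which forces $\gamma \geq 0$.

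\emph{Showing $\gamma = 0$ implies validity over $\emph{conv}(\mathcal{P})$.} Take any $z \in \mathcal{P}$; since $\text{conv}(\mathcal{P})$ is the convex hull of integer points, it suffices to verify the inequality at each such $z$. Set $y := \max_j h_j$; then $(y,z)$ satisfies both the knapsack constraint (which depends only on $z$) and all the mixing constraints $y + h_j z_j \geq h_j$ (because $y \geq h_j$ already), so $(y,z) \in \mathcal{Q}$. Applying the valid inequality with $\gamma=0$ gives $\sum_j \alpha_j z_j \geq \beta$ for this $z$, as desired.

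\emph{Main obstacle.} There is no real obstacle here; both statements follow immediately from the observation that $y$ is unbounded above within $\mathcal{Q}$ with $z$ fixed. The only point that needs a brief sanity check is that $\mathcal{Q}$ is nonempty and that $(\max_j h_j,\,z)\in\mathcal{Q}$ for every $z\in\mathcal{P}$, both of which follow from the standing assumptions $a_j\leq p$ for all $j$ and $h_j\geq 0$. I would present the lemma as a short two-paragraph argument mirroring the two parts above.
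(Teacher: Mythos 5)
Your proof is correct and is essentially the paper's argument spelled out concretely: your unbounded family $(\max_j h_j + t,\,0)$ is exactly the observation that $(1,0)$ lies in the recession cone of $\mathrm{conv}(\mathcal{Q})$, and your lifting of each $z\in\mathcal{P}$ to $(\max_j h_j,\,z)\in\mathcal{Q}$ is the statement that $\mathrm{conv}(\mathcal{P})=\mathrm{proj}_z(\mathrm{conv}(\mathcal{Q}))$, which the paper invokes directly.
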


\begin{proof} Observe that $(1,0)\in \text{cone}(\text{conv}(\mathcal{Q}))$. This implies that $\gamma\geq 0$. Moreover, since $\text{conv}(\mathcal{P})=\text{proj}_z (\text{conv}(\mathcal{Q}))$, it follows that if $\gamma=0$ then $(\ref{gvdi})$ is a valid inequality for $\text{conv}(\mathcal{P})$.\end{proof}

As a side note, the problem of finding a characterization for the class of all valid inequalities of conv$(\mathcal{P})$ is a very difficult problem and it has been extensively studied; the seminal works may be found in \cite{balas1975, balas1978, hammer1975, wosley1975}. Recall that only those valid inequalities for $\text{conv}(\mathcal{Q})$ are of interest that do not come from $\text{conv}(\mathcal{P})$. As a result, by rescaling the coefficients, if necessary, we may assume that (\ref{gvdi}) has the following form: \begin{eqnarray} y+\sum_{j\in [n]}\alpha_jz_j\geq \beta.\label{gvdi2} \end{eqnarray}

It turns out that it is possible to characterize the inequalities of type (\ref{gvdi2}) by considering the set of all vectors $(\alpha,\beta)$ that give valid inequalities of type (\ref{gvdi2}). In this section, we will explicitly find this set, which happens to be a polyhedron. This polyhedron and its formulation will help us throughout the paper with various results on the structure of conv$(\mathcal{Q})$.

Let $\nu:=\max\{k:\sum_{j\leq k}a_j\leq p\}$. Notice that if $(y^*,z^*)\in \mathcal{Q}$ then $y^*\geq h_{\nu+1}$. Define, for each $0\leq k\leq \nu$, the knapsack set $$\mathcal{P}_k:=\left\{z\in \{0,1\}^{[n]}:\sum_{j> k}a_jz_j\leq p-\sum_{j\leq k}a_j \right\}.$$ Observe that $\mathcal{P}=\mathcal{P}_0 \supset \mathcal{P}_1\supset \cdots \supset \mathcal{P}_{\nu}$. Define the polyhedron $$\mathcal{G}:=\left\{(\alpha,\beta)\in \mathbb{R}^n\times \mathbb{R}: (\ref{Gconstp})\right\}$$ where \begin{eqnarray}
\sum_{j\leq k}\alpha_j+\sum_{j> k}\alpha_jz^*_j+h_{k+1} \geq \beta ~~~~\forall~ z^*\in \mathcal{P}_k, \forall ~0\leq k\leq \nu.\label{Gconstp} \end{eqnarray} The following theorem proves that $\mathcal{G}$ is the desired set, and is one of the main results of this section.

\begin{thm}\label{validthm} Choose $(\alpha,\beta)\in \mathbb{R}^n\times \mathbb{R}$. Then $(\ref{gvdi2})$ is a valid inequality for \emph{conv}$(\mathcal{Q})$ if and only if $(\alpha,\beta)\in \mathcal{G}$.\end{thm}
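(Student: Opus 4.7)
The plan is to prove both directions by working with an explicit family of ``canonical'' feasible points of $\mathcal{Q}$. Given any prefix length $k \in \{0,\ldots,\nu\}$ and any tail $z^* \in \mathcal{P}_k$, I want to consider the point $(\bar y,\bar z)$ defined by $\bar y := h_{k+1}$, $\bar z_j := 1$ for $j\leq k$, and $\bar z_j := z^*_j$ for $j > k$. The inequalities in $(\ref{Gconstp})$ are exactly the evaluations of the template $(\ref{gvdi2})$ at these points, so the theorem will say that such canonical points already suffice to certify validity of $(\ref{gvdi2})$ over all of $\mathcal{Q}$.

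For the necessity direction, I would verify that each $(\bar y,\bar z)$ above actually lies in $\mathcal{Q}$: the knapsack $\sum_j a_j \bar z_j = \sum_{j\leq k}a_j + \sum_{j>k}a_j z^*_j \leq p$ follows directly from $z^*\in\mathcal{P}_k$, while for each mixing constraint $\bar y + h_j \bar z_j \geq h_j$ the case $\bar z_j = 1$ is automatic and the case $\bar z_j=0$ (which forces $j > k$) reduces to $h_{k+1}\geq h_j$, which holds because the $h_j$'s are indexed in non-increasing order (an assumption implicit in the paper's earlier observation that $y \geq h_{\nu+1}$ on $\mathcal{Q}$). Substituting $(\bar y,\bar z)$ into $(\ref{gvdi2})$ then yields exactly $(\ref{Gconstp})$ at $(k,z^*)$, so $(\alpha,\beta)\in\mathcal{G}$.

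For the sufficiency direction, I would fix an arbitrary $(y,z)\in\mathcal{Q}$ and let $k$ be the largest integer with $z_1=\cdots=z_k=1$ (taking $k:=0$ if $z_1=0$). Because $\sum_j a_j > p$ the all-ones vector is infeasible, so $k\leq n-1$ and $z_{k+1}=0$; similarly $\sum_{j\leq k}a_j\leq \sum_j a_jz_j\leq p$ forces $k\leq\nu$. The mixing constraint at index $k+1$ combined with $z_{k+1}=0$ gives $y\geq h_{k+1}$, and splitting the knapsack yields $\sum_{j>k}a_jz_j \leq p - \sum_{j\leq k}a_j$, so $z\in\mathcal{P}_k$. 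Applying $(\ref{Gconstp})$ to $z^*:=z$ and using $z_j=1$ for $j\leq k$ to rewrite $\sum_{j\leq k}\alpha_j = \sum_{j\leq k}\alpha_j z_j$ collapses the left-hand side to $\sum_j \alpha_j z_j + h_{k+1}$; combining with $y\geq h_{k+1}$ gives $(\ref{gvdi2})$ at $(y,z)$.

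The main obstacle is really only bookkeeping: choosing the correct prefix length $k$ from an arbitrary $(y,z)\in\mathcal{Q}$ to align with the constructed canonical point, and invoking the non-increasing ordering of the $h_j$'s at exactly the two places where it is needed (to verify feasibility of $(\bar y,\bar z)$ in necessity and to convert $z_{k+1}=0$ into $y\geq h_{k+1}$ in sufficiency). No deeper tool beyond direct substitution and the defining data of $\mathcal{Q}$ and $\mathcal{P}_k$ is required.
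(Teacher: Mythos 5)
Your proposal is correct and follows essentially the same route as the paper's proof: validity at the canonical points $(h_{k+1},\bar z)$ with an all-ones prefix and a tail in $\mathcal{P}_k$ gives necessity, and reducing an arbitrary point of $\mathcal{Q}$ to the constraint of $(\ref{Gconstp})$ indexed by the appropriate $k$ gives sufficiency. The only cosmetic differences are that you verify $(\ref{Gconstp})$ for every $z^*\in\mathcal{P}_k$ rather than just for a minimizer of $f_k$, and in the converse you select $k$ as the length of the all-ones prefix of $z$ (rather than by locating $y^*$ among the $h_j$), both of which are sound and correctly invoke the implicit ordering $h_1\geq\cdots\geq h_n$ exactly where the paper does.
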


We refer to $\mathcal{G}$ as the {\it coefficient polyhedron} of $\mathcal{Q}$. Before proving the above lemma, we would like to point out  that $\mathcal{G}$ has an alternate formulation with $O(n)$ non-linear inequalities. For $\alpha\in \mathbb{R}^n$ and $0\leq k\leq \nu$, define the {\it minimizer} function \begin{eqnarray} f_k(\alpha):=\min \left\{\sum_{j> k}\alpha_jz_j:z\in \mathcal{P}_k \right\}. \label{f}\end{eqnarray} Then it is easy to see that $$\mathcal{G}=\left\{(\alpha,\beta)\in \mathbb{R}^n\times \mathbb{R}:\sum_{j\leq k}\alpha_j+ f_k(\alpha) +h_{k+1}\geq \beta ~\forall ~0\leq k\leq \nu\right\}.$$ As one may expect, the minimizers $f_k$ will play a central role  when looking for structural properties of $\mathcal{G}$. For instance, as long as the minimizers $f_k$ can be efficiently solved to optimality, the polyhedron $\mathcal{G}$ can be efficiently described by a system of $O(n)$ non-linear inequalities. Notice that this is not too surprising since if $f_0$ can be efficiently solved for all $\alpha\in \mathbb{R}$, then the reverse polar of conv$(\mathcal{P})$ can be efficiently described, and this yields an efficient method to obtain valid inequalities for conv$(\mathcal{Q})$, which correspond to points in $\mathcal{G}$. For more details on this approach, see Sen \cite{sen1992}. However, note that our purpose here is {\it not} necessarily finding fast and practical algorithms to obtain valid inequalities but rather obtaining structural results about conv$(\mathcal{Q})$. The coefficient polyhedron $\mathcal{G}$ and the minimizers $f_k$ will be useful to better understand the structure of the polyhedron conv$(\mathcal{Q})$.

It is now time to prove Theorem \ref{validthm}.

\begin{proof} Suppose that \begin{eqnarray} y+\sum_{j\in [n]}\alpha_jz_j\geq \beta \label{genvdi}\end{eqnarray} is a valid inequality of conv$(\mathcal{Q})$ for some $(\alpha,\beta)\in \mathbb{R}^n\times \mathbb{R}$. Take $0\leq k\leq \nu$. Let $z^*\in \{0,1\}^n$ be an optimal solution to (\ref{f}) with $z^*_j=1$ for all $1\leq j\leq k$. Note that such an optimal solution exists due to how $\mathcal{P}_k$ and $f_k$ are defined. Let $y^*=h_{k+1}$. Observe that $(y^*,z^*)\in \mathcal{Q}$. Hence, since (\ref{genvdi}) is valid for $\mathcal{Q}$, it follows that \begin{align*}
\beta&\leq y^*+\sum_{j\in [n]}\alpha_jz^*_j\\
&= h_{k+1}+\sum_{j\leq k}\alpha_j+\sum_{j> k}\alpha_jz^*_j\\
&= h_{k+1}+\sum_{j\leq k}\alpha_j+f_k(\alpha). \end{align*} Since this holds for all $0\leq k\leq \nu$, it follows that $(\alpha,\beta)\in \mathcal{G}$.

Conversely, suppose that $(\alpha,\beta)\in \mathcal{G}$. Let $(y^*,z^*)\in \mathcal{Q}$. Observe that $y^*\geq h_{\nu+1}$ by definition. Temporarily let $h_0:=+\infty$. Suppose that $h_k> y^*\geq h_{k+1}$ for some $0\leq k\leq \nu$. Note that $z^*_j=1$ for all $j\leq k$ and so $z^*\in \mathcal{P}_k$. Then \begin{align*}
y^*+\sum_{j\in [n]}\alpha_jz^*_j&\geq h_{k+1}+\sum_{j\leq k}\alpha_j+ \sum_{j>k}\alpha_jz^*_j\\
&\geq h_{k+1}+\sum_{j\leq k}\alpha_j+ f_k(\alpha)\\
&\geq \beta~~\text{ since } (\alpha,\beta)\in \mathcal{G} \end{align*} and so (\ref{genvdi}) is valid for $(y^*,z^*)$. Therefore, (\ref{genvdi}) is a valid inequality for $\mathcal{Q}$ and hence for $\text{conv}(\mathcal{Q})$, as claimed. \end{proof}

The above characterization theorem can be viewed as a generalization of a similar characterization given in Luedtke et. al \cite{luedtke2010}, which is only applicable to the case when the knapsack constraint $\sum_{j\in [n]}a_jz_j\leq p$ is a cardinality constraint. For the sake of completeness, here we will state the result, which is a slightly modified version of Theorem 3 in \cite{luedtke2010}.

\begin{thm}[\cite{luedtke2010}]\label{jimcharQ} Suppose that $a_j=1$ for all $j\in [n]$. Then any valid inequality for $\emph{conv}(\mathcal{Q})$ with nonzero coefficient on $y$ can be written in the form \begin{eqnarray}
y+\sum_{j\in [n]}\alpha_jz_j\geq \beta. \label{jimchar} \end{eqnarray} Furthermore, $(\ref{jimchar})$ is valid for $\emph{conv}(\mathcal{Q})$ if and only if \begin{eqnarray} \sum_{j\leq k-1}\alpha_j+\min_{S\in \mathcal{S}_k} \sum_{j\in S}\alpha_j+h_k\geq \beta~~\forall~ 1\leq k\leq p+1,\label{constraints}\end{eqnarray} where $\mathcal{S}_k:=\{S\subset \{k,\ldots,n\}:|S|\leq p-k+1\}$. \end{thm}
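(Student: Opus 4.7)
The plan is to derive Theorem \ref{jimcharQ} as a direct specialization of the results already established in this section, namely Lemma \ref{genform} and Theorem \ref{validthm}. No new combinatorial argument should be required; the proof should reduce to unpacking the notation in the cardinality special case $a_j = 1$.

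For the first assertion, I would start from an arbitrary valid inequality $\gamma y + \sum_{j \in [n]} \alpha_j z_j \geq \beta$ with $\gamma \neq 0$. Lemma \ref{genform} gives $\gamma \geq 0$, so in fact $\gamma > 0$, and dividing through by $\gamma$ normalizes the coefficient on $y$ to $1$, producing an inequality of the form (\ref{jimchar}).

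For the characterization, I would set $a_j = 1$ for all $j \in [n]$ and read off each quantity appearing in Theorem \ref{validthm}. First, $\nu = \max\{k : k \leq p\} = p$. Second, for each $0 \leq k \leq p$, the set $\mathcal{P}_k$ becomes $\{z \in \{0,1\}^n : \sum_{j > k} z_j \leq p - k\}$, so the minimizer in (\ref{f}) becomes $f_k(\alpha) = \min\{\sum_{j \in S}\alpha_j : S \subset \{k+1,\ldots,n\},\ |S| \leq p - k\} = \min_{S \in \mathcal{S}_{k+1}} \sum_{j \in S} \alpha_j$. Substituting into the description of $\mathcal{G}$, Theorem \ref{validthm} yields that (\ref{jimchar}) is valid for conv$(\mathcal{Q})$ if and only if $\sum_{j \leq k} \alpha_j + \min_{S \in \mathcal{S}_{k+1}} \sum_{j \in S} \alpha_j + h_{k+1} \geq \beta$ for every $0 \leq k \leq p$. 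A final reindexing $k \mapsto k-1$, so that the index runs over $1 \leq k \leq p+1$, gives (\ref{constraints}) verbatim.

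There is essentially no obstacle to this plan beyond the off-by-one shift between $f_k$'s domain $\{k+1,\ldots,n\}$ and the indexing convention of $\mathcal{S}_k$ (which starts at $k$), and this is absorbed by the reindexing step above. The real work — showing that the full system defining $\mathcal{G}$ is both necessary and sufficient for validity — has already been carried out in the proof of Theorem \ref{validthm}, so specializing to the cardinality case is purely a matter of translation.
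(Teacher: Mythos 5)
Your proposal is correct and follows exactly the paper's own derivation: the paper likewise deduces Theorem \ref{jimcharQ} from Lemma \ref{genform} (to normalize $\gamma>0$ to $1$) and Theorem \ref{validthm}, observing that $\nu=p$ and $f_{k-1}(\alpha)=\min_{S\in\mathcal{S}_k}\sum_{j\in S}\alpha_j$ when $a_j=1$, with the same reindexing. No gaps.
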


Observe that in the case when $a_j=1$ for all $j\in [n]$, we have that $\nu=p$ and $$f_{k-1}(\alpha)=\min_{S\in \mathcal{S}_k} \sum_{j\in S}\alpha_j.$$ As a result, the inequalities (\ref{constraints}) are equivalent to $$\sum_{j\leq k}\alpha_j+f_k(\alpha)+h_{k+1}\geq \beta~~\forall ~0\leq k\leq \nu,$$ which is equivalent to $(\alpha,\beta)\in \mathcal{G}$. Hence, Theorem \ref{jimcharQ} follows from Theorem \ref{validthm} and Lemma \ref{genform}.

Using Theorem \ref{validthm}, we can find cutting planes valid for conv$(\mathcal{Q})$ by solving a linear program.

\begin{thm}\label{septhm} Suppose $(y^*,z^*)\in \mathbb{R}_+\times \mathbb{R}^n_+$ satisfies $z^*\in \emph{conv}(\mathcal{P})$. Let \begin{eqnarray} LP^*:=\min\left\{\sum_{j\in [n]}\alpha_jz^*_j-\beta: (\alpha,\beta)\in \mathcal{G}\right\}.\label{sepopt}\end{eqnarray} Then $(y^*,z^*)\in \emph{conv}(\mathcal{Q})$ if and only if $y^*+LP^*\geq 0$. Also, if $y^*+LP^*<0$ and $(\alpha^*,\beta^*)$ is an optimal solution to \emph{(\ref{sepopt})}, then $y+\sum_{j\in [n]}\alpha^*_jz_j\geq \beta^*$ is a valid inequality for $\emph{conv}(\mathcal{Q})$ which is violated by $(y^*,z^*)$. \end{thm}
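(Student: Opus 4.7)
The plan is to read the separation LP (\ref{sepopt}) through the lens of Theorem \ref{validthm}: since $\mathcal{G}$ is precisely the set of coefficient vectors producing valid inequalities of the form (\ref{gvdi2}) for conv$(\mathcal{Q})$, minimizing $\sum_j\alpha_j z^*_j-\beta$ over $\mathcal{G}$ is exactly the search for a most violated such inequality at $(y^*,z^*)$. Combined with Lemma \ref{genform}, which lets us restrict attention to that normalized form, both the biconditional and the ``also'' clause should fall out cleanly.

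For the forward direction, I would take any $(\alpha,\beta)\in\mathcal{G}$; Theorem \ref{validthm} gives $y^*+\sum_j\alpha_j z^*_j\geq\beta$, i.e., $\sum_j\alpha_j z^*_j-\beta\geq -y^*$, so infimizing over $\mathcal{G}$ yields $LP^*\geq -y^*$. For the converse I would argue by contrapositive: if $(y^*,z^*)\notin\text{conv}(\mathcal{Q})$, then because conv$(\mathcal{Q})$ is a (rational, hence closed) polyhedron there is a valid inequality $\gamma y+\sum_j\alpha_j z_j\geq\beta$ for conv$(\mathcal{Q})$ strictly violated at $(y^*,z^*)$. Lemma \ref{genform} forces $\gamma\geq 0$, and $\gamma=0$ is excluded because such an inequality would be valid on conv$(\mathcal{P})$ and therefore satisfied at $z^*\in\text{conv}(\mathcal{P})$. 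Thus $\gamma>0$, and dividing through by $\gamma$ produces a violated inequality in the form (\ref{gvdi2}) whose coefficient vector lies in $\mathcal{G}$ by Theorem \ref{validthm}; evaluating the LP objective at that point gives $LP^*<-y^*$, as required.

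Before extracting a separating inequality, I need to check that $LP^*$ is attained. The $k=0$ constraint of (\ref{Gconstp}) states $\sum_j\alpha_j z_j+h_1\geq\beta$ for every $z\in\mathcal{P}_0=\mathcal{P}$, so since $z^*\in\text{conv}(\mathcal{P})$ we get $\sum_j\alpha_j z^*_j\geq f_0(\alpha)\geq\beta-h_1$ for every $(\alpha,\beta)\in\mathcal{G}$, which bounds the objective of (\ref{sepopt}) below by $-h_1$. Since $\mathcal{G}$ is nonempty, the LP is solvable, and any optimal $(\alpha^*,\beta^*)$ satisfies $\sum_j\alpha^*_j z^*_j-\beta^*=LP^*<-y^*$; hence $y^*+\sum_j\alpha^*_j z^*_j<\beta^*$, while Theorem \ref{validthm} guarantees the inequality $y+\sum_j\alpha^*_j z_j\geq\beta^*$ is valid for conv$(\mathcal{Q})$, giving the desired separating cut.

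The only genuine subtlety is the separating-hyperplane step in the reverse direction, which needs closedness of conv$(\mathcal{Q})$; this is standard from Meyer's theorem on convex hulls of rational mixed-integer linear systems. Once that is in hand, the entire proof is a direct translation between the two equivalent descriptions of valid inequalities with nonzero $y$-coefficient: the geometric one (valid inequalities for conv$(\mathcal{Q})$, combined with Lemma \ref{genform}) and the algebraic one (points of $\mathcal{G}$, via Theorem \ref{validthm}).
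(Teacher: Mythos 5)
Your proof is correct and takes essentially the same route as the paper: both directions reduce to Theorem \ref{validthm} together with Lemma \ref{genform} and the hypothesis $z^*\in\text{conv}(\mathcal{P})$, your converse being simply the contrapositive of the paper's direct argument that no valid inequality of either type ($\gamma>0$ or $\gamma=0$) can separate $(y^*,z^*)$ from $\text{conv}(\mathcal{Q})$. Your extra checks --- that the LP is bounded below via the $k=0$ constraint (hence the optimum is attained) and that the separating-hyperplane step needs closedness of $\text{conv}(\mathcal{Q})$ --- are details the paper leaves implicit, and are welcome additions.
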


\begin{proof} If $(y^*,z^*)\in \text{conv}(\mathcal{Q})$, then $y^*+LP^*\geq 0$ by Theorem \ref{validthm}. Conversely, if $y^*+LP^*\geq 0$ then $$y^*+\sum_{j\in [n]}\alpha_jz^*_j\geq \beta$$ for all $(\alpha,\beta)\in \mathcal{G}$. Hence, by Theorem \ref{validthm}, $(y^*,z^*)$ satisfies all inequalities of the type (\ref{gvdi}) with $\gamma>0$. Moreover, since $z^*\in \text{conv}(\mathcal{Q})$, $(y^*,z^*)$ also satisfies all inequalities of the type (\ref{gvdi}) with $\gamma=0$. Hence, there is no valid inequality for $\text{conv}(\mathcal{Q})$ that separates $(y^*,z^*)$ from $\text{conv}(\mathcal{Q})$, so $(y^*,z^*)\in \text{conv}(\mathcal{Q})$. \end{proof}

Observe that the running time of solving the linear program (\ref{sepopt}) is polynomial in $n$ and $T_{\mathcal{P}}$, where $T_{\mathcal{P}}$ is the running time of optimizing $f_k$ over $\mathcal{P}_k$, for all $0\leq k \leq \nu$. Hence, solving (\ref{sepopt}) is efficient if minimizing over $\mathcal{P}_k$ can be accomplished efficiently, for all $0\leq k\leq \nu$.

\section{Facet-Defining Inequalities of conv($\mathcal{Q}$)}\label{fdisection}

The following theorem describes some interesting properties for facet-defining inequalities of $\text{conv}(\mathcal{Q})$. This helps us understand better the structure of the polyhedron conv($\mathcal{Q}$).

\begin{thm}\label{genfdithm} Suppose that the inequality \begin{eqnarray} y+\sum_{j\in [n]}\alpha_jz_j\geq \beta\label{genfdi}\end{eqnarray} is facet-defining for \emph{conv}$(\mathcal{Q})$ for some $(\alpha, \beta)\in \mathbb{R}^n\times \mathbb{R}$. Then \begin{enumerate}

\item[(i)] $(\alpha,\beta)$ is an extreme point of $\mathcal{G}$,

\item[(ii)] $\beta=h_1+f_0(\alpha)$, and

\item[(iii)] if $\alpha_k<0$ for some $1\leq k\leq n$, then $a_k>0$.

\end{enumerate} \end{thm}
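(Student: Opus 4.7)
The plan is to exploit Theorem \ref{validthm}, which identifies inequalities of the form (\ref{gvdi2}) with points of $\mathcal{G}$, together with the standard description of the vertices of $\text{conv}(\mathcal{Q})$: each vertex has the form $(y^*, z^*)$ with $z^* \in \mathcal{P}$ integer and $y^* = \max\{h_j : z^*_j = 0\}$. A recurring auxiliary fact is that the facet $F := \{(y, z) \in \text{conv}(\mathcal{Q}) : y + \sum_j \alpha_j z_j = \beta\}$ is bounded: the recession cone of $\text{conv}(\mathcal{Q})$ is $\{(v, \mathbf{0}) : v \geq 0\}$, and the defining hyperplane of $F$ kills the direction $(1, \mathbf{0})$ because its $y$-coefficient is nonzero. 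Hence $F$ equals the convex hull of its tight integer vertices.

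For (i), I suppose that $(\alpha, \beta) = \tfrac{1}{2}((\alpha^1, \beta^1) + (\alpha^2, \beta^2))$ with both summands in $\mathcal{G}$. By Theorem \ref{validthm}, each summand yields a valid inequality for $\text{conv}(\mathcal{Q})$, and their average is the facet inequality. At any $(y,z) \in F$, the average is tight and each summand is individually nonnegative, so both summand inequalities must be tight throughout $F$. Since $F$ spans a unique hyperplane in $\mathbb{R}^{n+1}$ and both summand inequalities carry $y$-coefficient $1$, both must coincide with the facet inequality, giving $(\alpha^1, \beta^1) = (\alpha^2, \beta^2) = (\alpha, \beta)$.

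For (ii), membership in $\mathcal{G}$ already gives $\beta \leq h_1 + f_0(\alpha)$ from the $k = 0$ constraint. Suppose for contradiction that this is strict. Any tight integer vertex $(y^*, z^*)$ with $z^*_1 = 0$ would satisfy $y^* = h_1$ and hence $\sum_j \alpha_j z^*_j = \beta - h_1 < f_0(\alpha)$, contradicting the definition of $f_0$ since $z^* \in \mathcal{P}$. Thus every tight vertex has $z^*_1 = 1$, forcing $F \subseteq \{z_1 = 1\}$. Together with $F \subseteq \{y + \sum_j \alpha_j z_j = \beta\}$, and the fact that these two hyperplanes are not parallel (one has $y$-coefficient $0$, the other $1$), $F$ is confined to a codimension-$2$ affine subspace, contradicting $\dim F = n$.

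For (iii), I argue the contrapositive: if $a_k = 0$ then $\alpha_k \geq 0$. By the same dimension argument as in (ii), $F \not\subseteq \{z_k = 1\}$, so there is a tight vertex $(y^*, z^*)$ with $z^*_k = 0$. Flipping $z^*_k$ to $1$ preserves membership in $\mathcal{P}$ since $a_k = 0$; the new vertex $(y^{**}, z^{**})$ (still well defined because $\sum_j a_j > p$ forces at least one remaining zero coordinate) has $y^{**} \leq y^*$ because removing an index from the zero set cannot enlarge the maximum. Validity at $(y^{**}, z^{**})$ minus tightness at $(y^*, z^*)$ reduces to $\alpha_k \geq y^* - y^{**} \geq 0$, contradicting $\alpha_k < 0$. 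The principal obstacle is the dimension/hyperplane bookkeeping in (ii) and (iii), which relies on the boundedness of $F$ and on the linear independence of the coordinate hyperplane $\{z_k = c\}$ from the facet hyperplane.
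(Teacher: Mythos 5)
Your proof is correct. Parts (i) and (ii) follow essentially the paper's route: (i) is the standard extremality argument (the paper states it in one line; you spell out why both summands must be tight on the facet and hence coincide), and (ii) is the paper's argument in contrapositive form --- both hinge on the upper bound $\beta\leq h_1+f_0(\alpha)$ coming from the $k=0$ constraint of $\mathcal{G}$ (equivalently the point $(h_1,z^*)$ with $z^*$ a minimizer of $f_0$) together with the existence of a tight point having $z_1=0$, which you obtain from your dimension/boundedness bookkeeping and the paper obtains by noting the facet differs from $z_1\leq 1$. Part (iii), however, is genuinely different: the paper fixes $a_k=0$, $\alpha_k<0$, projects out $z_k$, shows $y+\sum_{j\neq k}\alpha_jz_j\geq\beta-\alpha_k$ is valid, and concludes that the facet inequality is the sum of this and the valid inequality $-\alpha_k(1-z_k)\geq 0$, hence dominated --- a contradiction. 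You instead take a tight vertex with $z^*_k=0$ (which exists by the same dimension argument as in (ii)), flip $z^*_k$ to $1$ (feasible for the knapsack since $a_k=0$, and the associated $y$-value does not increase), and read off $\alpha_k\geq y^*-y^{**}\geq 0$ by comparing validity at the new point with tightness at the old. Your flip argument is more elementary and self-contained, reusing the machinery you already set up for (ii); the paper's projection argument is slightly longer but exhibits explicitly the two valid inequalities whose sum dominates the candidate facet, which is a useful structural statement in its own right. Your supporting claims --- that the facet is bounded because the recession cone of $\mathrm{conv}(\mathcal{Q})$ is $\mathrm{cone}\{(1,\mathbf{0})\}$ and the $y$-coefficient is nonzero, that every vertex has the form $(\max\{h_j:z_j=0\},z)$ with $z\in\mathcal{P}$, and that $\sum_j a_j>p$ guarantees a nonempty zero set --- are all accurate and adequately justified.
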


\begin{proof} (i) This is true since otherwise (\ref{genfdi}) would be the convex combination of two distinct valid inequalities of conv($\mathcal{Q}$), which cannot be the case since (\ref{genfdi}) is a facet-defining inequality for conv($\mathcal{Q}$).

\noindent (ii) Let $z^*\in \mathcal{P}_0$ be a solution that attains the minimum of $f_0$. It is then by definition clear that $(h_1, z^*)\in \mathcal{Q}$. Thus by (\ref{genfdi}) we get that $$\beta\leq h_1+\sum_{j\in [n]}\alpha_jz^*_j=h_1+f_0(\alpha)$$ and so $\beta\leq h_1+f_0(\alpha)$. Since conv$(\mathcal{Q})$ is full-dimensional and (\ref{genfdi}) is a facet-defining inequality different from $z_1\leq 1$, it follows that there is a point $(y',z')\in \mathcal{Q}$ on the facet defined by (\ref{genfdi}) such that $z'_1=0$. Note that this implies that $y'\geq h_1$. Note also that $z'\in \mathcal{P}_0$. We thus have $$\beta=y'+\sum_{j\in [n]}\alpha_jz'_j\geq h_1+f_0(\alpha)$$ which implies that $\beta\geq h_1+f_0(\alpha)$. Hence, $\beta=h_1+f_0(\alpha)$, as claimed.

\noindent (iii) Suppose not, and assume that $a_k=0$ for some $1\leq k\leq n$ with $\alpha_k<0$. Since $a_k=0$ it follows that $$\text{proj}_{\mathbb{R}\times \mathbb{R}^{[n]\setminus \{k\}}}\mathcal{Q}=\left\{(y,z)\in \mathbb{R}_+\times \{0,1\}^{[n]\setminus \{k\}}:\sum_{j\in [n]\setminus \{k\}}a_jz_j\leq p,~y+h_iz_i\geq h_i~\forall i\in [n]\setminus \{k\}\right\}.$$ We claim that \begin{eqnarray} y+\sum_{j\in [n]\setminus \{k\}}\alpha_jz_j\geq \beta-\alpha_k\label{projvalid}\end{eqnarray} is a valid inequality for $\text{proj}_{\mathbb{R}\times \mathbb{R}^{[n]\setminus \{k\}}}\mathcal{Q}$. Let $(y^*,z^k)\in \text{proj}_{\mathbb{R}\times \mathbb{R}^{[n]\setminus \{k\}}}\mathcal{Q}$. Define $z^*\in \mathbb{R}^n$ as follows: $z^*_j=z^k_j$ if $j\neq k$ and $z^*_k=1$. Since $a_k=0$ it follows that $(y^*,z^*)\in \mathcal{Q}$. Thus, since (\ref{genfdi}) is valid for $\mathcal{Q}$, it follows that $$\beta-\alpha_k\leq y^*+\sum_{j\in [n]}\alpha_jz^*_j-\alpha_k= y^*+\sum_{j\in [n]\setminus \{k\}}\alpha_jz^*_j,$$ and so (\ref{projvalid}) is valid for $\text{proj}_{\mathbb{R}\times \mathbb{R}^{[n]\setminus \{k\}}}\mathcal{Q}$, and so it is also a valid inequality for $\mathcal{Q}$. However, the facet-defining inequality (\ref{genfdi}) is the sum of (\ref{projvalid}) and the inequality $-\alpha_k(1-z_k)\geq 0$, which is valid for $\mathcal{Q}$ since $\alpha_k<0$. But then (\ref{genfdi}) is dominated by (\ref{projvalid}), a contradiction. Thus, $a_k> 0$. 
\end{proof}

In the next section, we give an explicit class of facet-defining inequalities of $\text{conv}(\mathcal{Q})$, which may be useful for generating cutting planes in a branch-and-cut algorithm.

\section{An Explicit Class of Facet-Defining Inequalities for $\text{conv}(\mathcal{Q})$}\label{explicitfdisub}

In this section, we will first give an overview of the known explicit class of facet-defining inequalities for conv($\mathcal{Q}$). We will then introduce a new explicit class of facet-defining inequalities that subsumes all the previously known classes.

Chronologically speaking, the first class of facet-defining inequalities for conv$(\mathcal{Q})$ are called the {\it strengthened star inequalities} (see \cite{luedtke2010, atamturk2000}).

\begin{thm}[\cite{luedtke2010, atamturk2000}]\label{starineqthm} The strengthened star inequalities \begin{eqnarray} y+\sum_{j=1}^{a}(h_{t_j}-h_{t_{j+1}})z_{t_j}\geq h_{t_1}~~~~\forall~T=\{t_1,\ldots,t_a\}\subset \{1,\ldots,\nu\} \label{starineq}\end{eqnarray} with $t_1<\cdots<t_a$ and $h_{t_{a+1}}:=h_{\nu+1}$ are valid for $\emph{conv}(\mathcal{Q})$. Moreover, $(\ref{starineq})$ is facet-defining for $\emph{conv}(\mathcal{Q})$ if and only if $h_{t_1}=h_1$. \end{thm}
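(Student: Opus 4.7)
The plan has three parts. For validity I would apply Theorem \ref{validthm} with $\alpha_{t_j} := h_{t_j} - h_{t_{j+1}}$ for $j = 1,\ldots,a$, $\alpha_j := 0$ for $j \notin T$, and $\beta := h_{t_1}$. Monotonicity of $h$ (assumed non-increasing) gives $\alpha \ge 0$, whence $f_k(\alpha) \ge 0$ for each $0 \le k \le \nu$ since $\mathbf{0}$ restricted to $\{k+1,\ldots,n\}$ lies in $\mathcal{P}_k$. Letting $j^\star := \max\{j : t_j \le k\}$ (with $j^\star := 0$ if no such $j$ exists, and the convention $h_{t_0} := h_{t_1}$), a telescoping sum yields $\sum_{i \le k}\alpha_i = h_{t_1} - h_{t_{j^\star+1}}$, while monotonicity of $h$ gives $h_{k+1} \ge h_{t_{j^\star+1}}$ because $t_{j^\star+1} \ge k+1$. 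Adding these two inequalities produces exactly the $k$-th defining inequality of $\mathcal{G}$.

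For the ``only if'' direction of the facet characterization I would quote Theorem \ref{genfdithm}(ii): since $\alpha \ge 0$ and $\mathbf{0} \in \mathcal{P}_0$, we have $f_0(\alpha) = 0$, so the theorem forces $\beta = h_1$, i.e., $h_{t_1} = h_1$.

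For the ``if'' direction, assume $h_{t_1} = h_1$ (so $t_1 = 1$ under strict ordering of $h$). I would exhibit $n+1$ affinely independent points of $\mathcal{Q}$ lying on the face defined by (\ref{starineq}): the base point $P_0 := (h_1, \mathbf{0})$; for each $j \in [n] \setminus T$, the point $P_j := (h_1, e_j)$; and for each $\ell \in \{1,\ldots,a\}$, the point $Q_\ell := (h_{t_{\ell+1}}, z^{(\ell)})$ where $z^{(\ell)}_i = 1$ iff $i < t_{\ell+1}$ (adopting $t_{a+1} := \nu+1$). Membership in $\mathcal{Q}$ reduces to $\sum_{i < t_{\ell+1}} a_i \le \sum_{i \le \nu} a_i \le p$ and to $h_{t_{\ell+1}} \ge h_i$ for $i \ge t_{\ell+1}$, both immediate from monotonicity of $h$ and the definition of $\nu$; tightness at $Q_\ell$ is a telescoping check, $h_{t_{\ell+1}} + \sum_{j=1}^{\ell}(h_{t_j} - h_{t_{j+1}}) = h_{t_1} = h_1$. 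The same tightness check for $P_0$ and the $P_j$'s is immediate because their $T$-coordinates are zero.

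The one step that requires care is affine independence. I would consider the $n$ difference vectors $(0, e_j)$ for $j \notin T$ together with $v_\ell := Q_\ell - Q_{\ell-1}$ for $\ell = 1,\ldots,a$ (setting $Q_0 := P_0$), giving $v_\ell = \bigl(h_{t_{\ell+1}} - h_{t_\ell},\, \sum_{i = t_\ell}^{t_{\ell+1}-1} e_i\bigr)$. Among all these vectors, the coordinate at index $t_\ell \in T$ is supported only by $v_\ell$, which forces the coefficients of the $v_\ell$'s in any linear dependence to vanish one by one; the remaining $z$-coordinates (at indices $j \notin T$) then force the $(0,e_j)$ coefficients to vanish. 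The main obstacle I anticipate is really just this bookkeeping together with keeping track of the three kinds of indices that appear (elements of $T$, elements of $\{1,\ldots,\nu\}\setminus T$, and elements of $\{\nu+1,\ldots,n\}$), since validity and necessity are essentially one-line invocations of the results in the previous sections.
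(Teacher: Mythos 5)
The paper does not prove Theorem \ref{starineqthm}; it quotes it from \cite{luedtke2010, atamturk2000}, so there is no in-paper proof to compare yours against. Your argument is correct and self-contained, and it follows precisely the template the paper itself uses to prove the more general Theorem \ref{explicitfdi}: validity by checking $(\alpha,\beta)\in\mathcal{G}$ and invoking Theorem \ref{validthm} (here $f_k(\alpha)=0$ because $\alpha\ge 0$ and the all-zero restriction lies in $\mathcal{P}_k$, combined with the telescoping identity and $h_{k+1}\ge h_{t_{j^\star+1}}$); necessity of $h_{t_1}=h_1$ from Theorem \ref{genfdithm}(ii) together with $f_0(\alpha)=0$; and sufficiency by exhibiting $n+1$ affinely independent tight points, which suffices since the paper establishes that $\mathrm{conv}(\mathcal{Q})$ is full-dimensional. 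Your point set is essentially the $S=\emptyset$ specialization of the one used in the proof of Theorem \ref{explicitfdi} (note that Theorem \ref{explicitfdi} only yields $S=\emptyset$ when $p=s_m$ for some $m$, so a direct argument such as yours is genuinely needed for general $a$). One cosmetic slip: since the hypothesis is only $h_{t_1}=h_1$ and the $h_j$ need not be strictly decreasing, $t_1$ may exceed $1$, in which case $v_1=Q_1-P_0$ has $z$-part $\sum_{i=1}^{t_2-1}e_i$ rather than $\sum_{i=t_1}^{t_2-1}e_i$; this changes nothing, because coordinate $t_\ell$ is still carried only by $v_\ell$ among your $n$ difference vectors, so the independence argument goes through verbatim.
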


As shown in \cite{gunluk2001, atamturk2000, guan2007}, the (strengthened) star inequalities can be separated in polynomial time and are sufficient to describe the convex hull of $$\mathcal{R}:=\left\{(y,z)\in \mathbb{R}_+ \times \{0,1\}^n:y+h_iz_i\geq h_i ~ \forall i\in [n]\right\}.$$ However, as it turns out, when a knapsack constraint is enforced in $\mathcal{R}$ to obtain $\mathcal{Q}$, the convex hull becomes much more complex.


Later, Luedtke et. al \cite{luedtke2010} found a larger and subsuming class of facet-defining inequalities for conv$(\mathcal{Q})$ in the case when the knapsack constraint $\sum_{j\in [n]}a_jz_j\leq p$ is just a cardinality constraint, i.e. $a_j=1$ for all $j\in [n]$. Subsequently, K\"{u}\c{c}\"{u}kyavuz \cite{kucukyavuz2012} introduced 
an even larger and subsuming class facet-defining inequalities for conv$(\mathcal{Q})$, called the $(T,\Pi_L)$ {\it inequalities}, which again only applies to the case when the knapsack constraint $\sum_{j\in [n]}a_jz_j\leq p$ is just a cardinality constraint. Here, we only state the latter class.

\begin{thm}[\cite{kucukyavuz2012}] Suppose that $a_j=1$ for all $j\in [n]$. Take a positive integer $m\leq \nu=p$. Suppose that \begin{enumerate}

\item[(i)] $T:=\{t_1,\ldots,t_a\}\subset \{1,\ldots,m\}$, where $t_1<\ldots <t_a$; and

\item[(ii)] $L\subset \{m+2,\ldots,n\}$ and take a permutation of the elements in $L$, $\Pi_L:=\{\ell_1,\ldots,\ell_{p-m}\}$ such that $\ell_j> m+j$ for all $1\leq j\leq p-m$. \end{enumerate} Set $t_{a+1}:=m+1$. Let $\Delta_1:=h_{m+1}-h_{m+2}$, and for $2\leq j\leq p-m$ define $$\Delta_j:= \max\left\{\Delta_{j-1}, h_{m+1}- h_{m+1+j}-\sum{\left(\Delta_i:\ell_i> m+j,i<j\right)}\right\}.$$
Then the $(T,\Pi_L)$ inequality \begin{eqnarray}
y+\sum_{j=1}^{a}(h_{t_j}-h_{t_{j+1}}) z_{t_j}+\sum_{j=1}^{p-m}{\Delta_j(1-z_{q_j})} \geq h_{t_1} \label{TPiineq}\end{eqnarray} is valid for $\emph{conv}(\mathcal{Q})$. Furthermore, $(\ref{TPiineq})$ is facet-defining inequality for $\emph{conv}(\mathcal{Q})$ if and only if $h_{t_1}=h_1$. \end{thm}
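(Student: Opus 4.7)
The plan is to exploit Theorem \ref{validthm} for the validity part, and Theorem \ref{genfdithm} together with an explicit construction of tight feasible solutions for the facet-defining part. Writing the $(T,\Pi_L)$ inequality in the form of (\ref{gvdi2}), we read off the coefficients
\[
\alpha_{t_j}=h_{t_j}-h_{t_{j+1}} \text{ for } j\in [a],\qquad \alpha_{\ell_j}=-\Delta_j \text{ for } j\in [p-m],
\]
$\alpha_r=0$ for all other $r$, and $\beta=h_{t_1}-\sum_{j=1}^{p-m}\Delta_j$.

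For validity, I would check that $(\alpha,\beta)\in \mathcal{G}$. Because $a_j=1$ for all $j$, we have $\mathcal{P}_k=\{z\in\{0,1\}^n:\sum_{j>k}z_j\leq p-k\}$, and $f_k(\alpha)$ is obtained by choosing the $p-k$ indices $j>k$ whose coefficients $\alpha_j$ are most negative. Since the only negative coefficients are the $-\Delta_j$'s sitting on $\ell_j$, the minimizer reduces to summing an appropriate subset of $-\Delta_j$'s, and the $T$-contribution on the left-hand side collapses by telescoping (using $t_{a+1}=m+1$). The bulk of the work lies in checking (\ref{Gconstp}) for $m<k\leq p$: writing $k=m+j$, the hypothesis $\ell_i>m+i$ forces any $\ell_i\leq k$ to satisfy $i<j$, which lets me match the surviving $-\Delta_i$ terms appearing on the left against those removed from $f_k$'s selection, and the defining recursion $\Delta_j\geq h_{m+1}-h_{m+1+j}-\sum_{i<j,\,\ell_i>m+j}\Delta_i$ is tight enough to close the resulting gap by induction on $j$.

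For the facet-defining characterization, Theorem \ref{genfdithm}(ii) forces $\beta=h_1+f_0(\alpha)$. Since $f_0(\alpha)=-\sum_{j=1}^{p-m}\Delta_j$, attained by setting $z_{\ell_j}=1$ for each $j$ (feasible because $p-m\leq p$) and $z_r=0$ elsewhere, this reduces exactly to $h_{t_1}=h_1$, giving the ``only if'' direction. For the ``if'' direction, I would exhibit $n+1$ affinely independent points of $\mathcal{Q}$ at which (\ref{TPiineq}) holds at equality. A natural base point is $(y,z)=(h_1,\chi_L)$, which is tight by construction. Further tight points can be generated by (a) stepping $y$ through $h_{t_1},h_{t_2},\ldots,h_{t_a},h_{m+1}$ while progressively turning off the corresponding $z_{t_i}$'s along the telescoping chain, (b) swapping an index $\ell_j\in L$ for an index in $\{m+1,\ldots,n\}\setminus L$ whose $\alpha$-value is zero and adjusting $y$ if necessary, and (c) flipping indices of $\{1,\ldots,m\}\setminus T$ (whose $\alpha$-coefficient is zero and whose flipping does not disturb the knapsack by much). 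The condition $\ell_j>m+j$ guarantees that the swaps in (b) are always admissible within the cardinality bound $p$.

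The principal obstacle is the inductive validity argument for the range $m<k\leq p$, where the recursion for $\Delta_j$ must be unwound level by level and the seemingly ad-hoc condition $\ell_j>m+j$ has to be invoked in precisely the right way: the recurrence is calibrated so that $\Delta_j$ is exactly large enough to compensate for the terms $\Delta_i$ ($i<j$) that disappear from $f_k$'s selection as $k$ increases from $m+j-1$ to $m+j$. A secondary difficulty is producing $n+1$ affinely independent tight points, since the variables partition into three categories ($T$, $L$, and the remainder) and each requires its own family of equality-preserving feasible solutions, with care needed to ensure that the resulting $(n+1)\times (n+1)$ matrix of coordinates has full rank.
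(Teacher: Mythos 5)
Your strategy coincides with the paper's: the paper does not prove this theorem from scratch but obtains it as the special case $S=L$, $q_i=\ell_i$, $\alpha_{\ell_j}=-\Delta_j$ of the more general Theorem \ref{explicitfdi}, and the proof of that theorem proceeds exactly along your outline --- verify $(\alpha,\beta)\in\mathcal{G}$ by computing the minimizers $f_k$, deduce the necessity of $h_{t_1}=h_1$ from Theorem \ref{genfdithm}(ii), and exhibit $n+1$ affinely independent tight points. Your validity argument is correct in substance: for $k=m+j$ the hypothesis $\ell_i>m+i$ gives $\{i:\ell_i\le m+j\}\subset\{1,\dots,j-1\}$, the monotonicity $\Delta_1\le\Delta_2\le\cdots$ forces $f_k(\alpha)=-\sum_{i>j}\Delta_i$, and constraint (\ref{Gconstp}) collapses to $\Delta_j+\sum\left(\Delta_i:i<j,\ \ell_i>m+j\right)\ge h_{m+1}-h_{m+1+j}$, which is precisely the recursion. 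The ``only if'' direction via $\beta=h_1+f_0(\alpha)$ is also right.

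The gap is in step (b) of your tight-point construction. Starting from the base point $(h_1,\chi_L)$ and merely exchanging $\ell_j$ for a zero-coefficient index leaves $z_1=0$, which forces $y\ge h_1$, while the term $\Delta_j(1-z_{\ell_j})$ now contributes $\Delta_j>0$; the left-hand side therefore strictly exceeds $h_{t_1}$ and no admissible adjustment of $y$ alone restores equality (lowering $y$ below $h_1$ would require turning on $z_1,\dots$, which your swap does not do and which consumes knapsack capacity). The correct tight point for $\ell_j$, as in the proof of Theorem \ref{explicitfdi}, takes $\ell=\ell(j)$ to be the largest index $\ell\le j$ at which $\Delta_j$ equals the explicit term $h_{m+1}-h_{m+\ell+1}-\sum\left(\Delta_i:\ell_i>m+\ell,\ i<\ell\right)$ of the recursion, sets $y=h_{m+\ell+1}$, turns on $z_i$ for \emph{all} $i\le m+\ell$ as well as the remaining members of $L$, and turns off $z_{\ell_j}$ together with $z_{\ell_i}$ for those $i<\ell$ with $\ell_i>m+\ell$; the cardinality constraint is then met with equality ($(m+\ell)+(p-m-\ell+1)-1=p$), and tightness of the inequality is exactly tightness of the recursion at level $\ell$. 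This is the one step where the precise form of the $\Delta_j$ recursion (equivalently (\ref{coeff})) must be invoked, and it is the step your sketch does not supply.
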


Observe that the $(T,\emptyset)$ inequalities are simply the strengthened star inequalities.


We now introduce a larger and subsuming class of facet-defining inequalities for conv$(\mathcal{G})$ in the general setting that coincides with the $(T,\Pi_L)$ inequalities in the case when $a_j=1$ for all $j\in [n]$. For all $1\leq m\leq n$, let $$s_m:=\sum_{j=1}^ma_j,$$ and let $s_0:=0$.

\begin{thm}\label{explicitfdi} Take an integer $0\leq m\leq \nu$ such that $p-s_m$ is an integer. For each $1\leq j\leq p-s_m$, let $k(j):=\max\{k:j\geq s_k-s_m\}$. Let \begin{enumerate}

\item[(i)] $T:=\{t_1,\ldots,t_a\}\subset \{1,\ldots,m\}$ where $t_1<\ldots <t_a$;

\item[(ii)] $S:=\{q_1,\ldots,q_s\}\subset \{m+2,\ldots,n\}$ where $s=p-s_m$ and $q_j> k(j)$ for all $1\leq j \leq p-s_m$; and

\item[(iii)] $S$ is chosen so that $a_j=1$ for all $j\in S$, and $a_j\leq s_m$ for all $j\notin S$. \end{enumerate}  Set $t_{a+1}=m+1$. Let $\alpha_{q_1}:=h_{k(1)+1}-h_{m+1}$, and for $2\leq j\leq p-s_m$, define \begin{eqnarray}\alpha_{q_j}:= \min\left\{\alpha_{q_{j-1}},  h_{k(j)+1}-h_{m+1}-\sum{\left(\alpha_{q_i}:q_i> k(j),i<j\right)}\right\}.\label{coeff}\end{eqnarray} Then
\begin{eqnarray} y+\sum_{j=1}^{a}(h_{t_j}-h_{t_{j+1}}) z_{t_j}+\sum_{i\in S}\alpha_iz_i \geq h_{t_1}+\sum_{i\in S}\alpha_i \label{fdieg1} \end{eqnarray} is a valid inequality for \emph{conv}$(\mathcal{Q})$. Furthermore, $(\ref{fdieg1})$ is a facet-defining inequality for \emph{conv}$(\mathcal{Q})$ if and only if $h_{t_1}=h_1$. \end{thm}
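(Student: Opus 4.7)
The plan is to prove validity via Theorem \ref{validthm} and then argue facet-definingness by combining Theorem \ref{genfdithm} with an explicit construction of tight points. For validity, let $\alpha\in\mathbb{R}^n$ be the coefficient vector with entries $\alpha_{t_j}=h_{t_j}-h_{t_{j+1}}\geq 0$ for $t_j\in T$, $\alpha_{q_j}$ defined by the recursion (\ref{coeff}) for $q_j\in S$, and $\alpha_j=0$ otherwise, and set $\beta:=h_{t_1}+\sum_{i\in S}\alpha_i$. By Theorem \ref{validthm}, in the form using the minimizers $f_k$, it suffices to verify that for every $0\leq k\leq \nu$,
\[
\sum_{j\leq k}\alpha_j+f_k(\alpha)+h_{k+1}\;\geq\;h_{t_1}+\sum_{i\in S}\alpha_i.
\]

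The core of the argument is the explicit analysis of $f_k(\alpha)$. A short induction on (\ref{coeff}) yields $\alpha_{q_j}\leq 0$ for every $j$. Together with $\alpha_j\geq 0$ on $T$ and $\alpha_j=0$ on $[n]\setminus(T\cup S)$, this means the minimum of $\sum_{j>k}\alpha_j z_j$ over $\mathcal{P}_k$ is attained by zeroing the $T$-coordinates with index $>k$ and switching on as many $S$-coordinates with index $>k$ as the residual budget $p-s_k$ allows, giving priority to the most negative $\alpha_{q_j}$. Condition (iii), namely $a_i=1$ for $i\in S$ and $a_j\leq s_m$ for $j\notin S$, together with the definition of $k(j)$, is precisely what makes this selection feasible and compatible with the budget as $k$ varies; in particular, for $k\leq m$ the entire set $S$ can be activated, while for $k>m$ the cutoff $k(j)$ pinpoints exactly which $q_j$'s can still be switched on. The remaining comparison reduces to a telescoping identity among the $\alpha_{q_j}$'s, which I would establish by induction on $j$; the two branches of the $\min$ in (\ref{coeff}) correspond to the two possible extremal configurations in $\mathcal{P}_k$, and the induction propagates equality or slack correctly through both.

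For facet-definingness, the necessity of $h_{t_1}=h_1$ is immediate from Theorem \ref{genfdithm}(ii): $\beta=h_1+f_0(\alpha)$ must hold, and the $k=0$ instance of the preceding computation gives $f_0(\alpha)=\sum_{i\in S}\alpha_i$, so $h_{t_1}=h_1$. For sufficiency, I will exhibit $n+1$ affinely independent points of $\mathcal{Q}$ on the hyperplane (\ref{fdieg1}). Starting from the canonical tight point $(h_1,z^\ast)$ with $z^\ast_i=1$ for $i\in S$ and $z^\ast_j=0$ otherwise, I generate further tight points by toggling each $t_j\in T$ on and raising $y$ to $h_{t_{j+1}}$; by swapping each $q_j\in S$ out of its slot for a same-cost replacement chosen within the degrees of freedom granted by condition (iii); and by toggling each coordinate $j\notin T\cup S$ independently, using $a_j\leq s_m$ to absorb it into the budget vacated by deactivating elements of $S$. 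Affine independence is read off the differences of these points with $(h_1,z^\ast)$, which span $\mathbb{R}^{n+1}$.

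The main obstacle is the combinatorial validity check: the recursion (\ref{coeff}) is tailored so that the worst case in $\mathcal{P}_k$ is met with equality for every $k$ simultaneously, and the interleaving between the $t_j$-block and the $q_j$-block under a general knapsack (as opposed to a cardinality constraint) requires careful bookkeeping. A secondary difficulty is ensuring that the $n+1$ points produced for facet-definingness are genuinely affinely independent; here condition (iii) is essential, both because $a_i=1$ on $S$ fixes the feasible trade-offs between $S$ and its complement and because $a_j\leq s_m$ off $S$ guarantees enough slack to perturb any single coordinate outside $T\cup S$ while remaining in $\mathcal{Q}$.
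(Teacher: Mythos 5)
Your overall strategy coincides with the paper's: compute $f_k(\alpha)$ explicitly, verify membership in $\mathcal{G}$ via Theorem \ref{validthm}, obtain the necessity of $h_{t_1}=h_1$ from Theorem \ref{genfdithm}(ii), and exhibit $n+1$ affinely independent tight points. But the two steps you defer are exactly where the content of the theorem lives, and as described one of them would fail. The serious gap is the tight points indexed by $S$. Starting from $(h_1,z^\ast)$ with $z^\ast$ the indicator of $S$, turning $z_{q_j}$ off changes the left-hand side by $-\alpha_{q_j}\geq 0$, and a ``same-cost replacement'' cannot restore equality: every coordinate outside $S$ has $\alpha\geq 0$, and every other coordinate of $S$ is already switched on. The only way to recover tightness is to lower $y$ below $h_1$, which forces additional $z$-coordinates on (for the mixing constraints) and hence others off (for the knapsack). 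The paper's construction does this by locating the largest index $\ell(j)\leq j$ at which the second branch of the $\min$ in (\ref{coeff}) is attained, setting $y=h_{k(\ell(j))+1}$, switching on everything up to $k(\ell(j))$, and switching off $q_j$ together with those $q_i$, $i<\ell(j)$, exceeding $k(\ell(j))$; both feasibility and tightness hinge on this choice of $\ell(j)$, and nothing in your sketch identifies it. (Your $T$-points have a more minor, fixable defect: toggling only $z_{t_1},\ldots,z_{t_j}$ on while moving $y$ to $h_{t_{j+1}}$ violates $y+h_iz_i\geq h_i$ for indices $i<t_{j+1}$ outside $T\cup S$; you must switch on \emph{all} coordinates below the chosen threshold, as the paper does with $z^k_i=1$ for $i<t_j$ and $y^k=h_{t_j}$.)

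The validity check is closer to complete but is still only asserted. For $m+1\leq k\leq\nu$ one needs the closed form $f_k(\alpha)=\sum\left(\alpha_{q_i}: i> s_k-s_m\right)$, whose proof uses all of the hypotheses at once (the monotonicity $\alpha_{q_1}\geq\cdots\geq\alpha_{q_s}$, $a_j=1$ on $S$, and the implication that $i>s_k-s_m$ forces $q_i>k$, which comes from $q_i>k(i)$); the constraint of $\mathcal{G}$ at level $k$ then reduces to $\alpha_{q_j}\leq h_{k+1}-h_{m+1}-\sum\left(\alpha_{q_i}:q_i>k,\,i<j\right)$ for $j=s_k-s_m$, which follows from (\ref{coeff}) only after observing that $k\leq k(j)$ and that enlarging the index set of a sum of nonpositive terms can only decrease it. These are short arguments once written down, but saying that ``the induction propagates equality or slack correctly through both'' does not discharge them.
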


We would like to explain how this theorem implies that the $(T,\Pi_L)$ inequalities (\ref{TPiineq}) are facet-defining for conv$(\mathcal{Q})$ when $a_j=1$ for all $j\in [n]$. Let $S:=L$ and $q_i:=\ell_i$ for all $1\leq i\leq p-m$. Observe that $s_m=m$ is an integer, $|S|=p-m=p-s_m$, and $k(j)=m+j$ for all $1\leq j\leq p-s_m$. Note that $q_j=\ell_j>m+j=k(j)$ for all $1\leq j\leq p-s_m$. Also, note that $a_k=1\leq s_m$ for all $k\in [n]\setminus (T\cup S)$ since $m\geq 1$. Lastly, observe that $\alpha_j=-\Delta_j$ for all $j\in S$. Hence, by Theorem \ref{explicitfdi}, the $(T,\Pi_L)$ inequalities (\ref{TPiineq}) are facet-defining for conv$(\mathcal{Q})$ when $a_j=1$ for all $j\in [n]$.

Observe that Theorem \ref{explicitfdi} can also be applied to any scalar multiple of the knapsack constraint, and this will potentially give us more facet-defining inequalities. That is, one can apply Theorem \ref{explicitfdi} to $\sum_{j\in [n]}da_jz_j\leq dp$, for any arbitrary positive real number $d$.

In Sect. \ref{sep}, we give a polynomial time separation algorithm to separate over a subset of (\ref{fdieg1}). This separation algorithm is analogous to and an appropriate generalization of the one given in K\"{u}\c{c}\"{u}kyavuz \cite{kucukyavuz2012}. Finally in Sect. \ref{prooffdi}, we give a proof of Theorem \ref{explicitfdi}.



\subsection{Separation of the new class of FDIs (\ref{fdieg1})}\label{sep}

In this section, we give a polynomial time separation algorithm over a subset of the inequalities (\ref{fdieg1}). This algorithm is analogous to the separation algorithm given in \cite{kucukyavuz2012}.

\begin{thm}\label{sepoverfdis} Take $0\leq m\leq \nu$ and $0\leq r\leq p-s_m$. Let $A_m:=\{m+2\leq j\leq n: a_j=1\}$. Suppose that $a_j\leq s_m$ for all $j\in [n]$, $k(1)<k(2)<\cdots<k(r)<k(r+1)$ and $F:=\{k(1)+1,\ldots,k(r)+1\}\subset A_m$. Then we can find the most violated inequality $(\ref{fdieg1})$ with $m$ as above and $S=F\cup G$ with $G\subset \{k(p-s_m)+1,\ldots,n\}$ in $O(p^3)$. \end{thm}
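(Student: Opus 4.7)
The plan is to separate the violation of (\ref{fdieg1}) at $(y^*,z^*)$ into a $T$-part and a $G$-part that can be optimized independently. Setting $\Phi(T):=h_{t_1}-\sum_{j=1}^{a}(h_{t_j}-h_{t_{j+1}})z^*_{t_j}$ and $\Psi(G):=\sum_{i\in S}\alpha_i(1-z^*_i)$, the violation rewrites as $V(T,G) = -y^* + \Phi(T) + \Psi(G)$. Because $T\subseteq\{1,\ldots,m\}$ and $S=F\cup G\subseteq\{m+2,\ldots,n\}$ involve disjoint $z$-variables, the only possible coupling is through the recursion (\ref{coeff}) defining the $\alpha_{q_j}$'s.

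The central step is a ``fixed-sequence'' lemma: once $(m,r,F)$ are fixed, the coefficients $\alpha_{q_{r+1}},\ldots,\alpha_{q_{p-s_m}}$ depend only on $(m,r)$ and not on the specific choice of $G$. Indeed, the hypotheses $k(1)<\cdots<k(r+1)$ and $G\subseteq\{k(p-s_m)+1,\ldots,n\}$, combined with the monotonicity of $k(\cdot)$, imply that for every $j>r$ and every $r<i<j$ the element $q_i\in G$ satisfies $q_i>k(p-s_m)\geq k(j)$, so the conditional sum in (\ref{coeff}) always reduces to $\sum_{r<i<j}\alpha_{q_i}$, regardless of $G$. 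Consequently $(\alpha_{q_j})_{j>r}$ is a fixed, non-positive, non-increasing sequence determined by $(m,r)$, and $\Psi(G)$ collapses to a constant plus $\sum_{j=r+1}^{p-s_m}\alpha_{q_j}(1-z^*_{q_j})$, with the only remaining freedom being the choice of $G$ and its bijection to positions.

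With decoupling in hand, I would maximize $\Phi$ by solving a longest-path problem on a DAG with vertex set $\{1,\ldots,m+1\}$ and arc weight $-(h_s-h_t)z^*_s$ from $s$ to $t$ for $s<t$, which runs in $O(m^2)$. I would then maximize $\Psi$ by selecting $|G|=p-s_m-r$ elements from $B:=A_m\cap\{k(p-s_m)+1,\ldots,n\}$ together with a bijection to positions $\{r+1,\ldots,p-s_m\}$. Since the position weights $\alpha_{q_j}$ are non-positive and non-increasing in $j$, the rearrangement inequality shows that the optimum is to pick the $|G|$ elements of $B$ with the largest $z^*$-values and to assign them in increasing order of $z^*$ to positions $r+1,r+2,\ldots,p-s_m$ (if $|B|<|G|$ the algorithm simply reports that no such inequality exists). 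The $O(p^3)$ bound then follows by wrapping both optimizations inside a unified dynamic program that also performs the rearrangement ordering and feasibility checks, analogous to the algorithm in K\"{u}\c{c}\"{u}kyavuz.

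The main obstacle is the fixed-sequence lemma: without the hypotheses $k(1)<\cdots<k(r+1)$ and the restriction $G\subseteq\{k(p-s_m)+1,\ldots,n\}$, the values $\alpha_{q_j}$ would couple $T$ and $G$ through the indicator ``$q_i>k(j)$'' inside (\ref{coeff}), destroying the clean splitting of $V(T,G)$ and invalidating the greedy matching. Once the lemma is secured, everything else is a routine shortest-path computation plus a sort-and-match, so the proof essentially reduces to verifying that these two structural hypotheses provide exactly the cut needed to freeze the $\alpha_{q_j}$-sequence.
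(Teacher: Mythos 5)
Your proposal is correct and follows essentially the same route as the paper: the decisive step in both is the observation that, under the hypotheses $k(1)<\cdots<k(r+1)$ and $G\subset\{k(p-s_m)+1,\ldots,n\}$, the coefficients $\alpha_{q_j}$ are independent of the particular choice of $G$, after which $T$ is optimized by a shortest-path computation on a DAG (as in K\"{u}\c{c}\"{u}kyavuz) and $G$ by greedily taking the $p-s_m-r$ elements with largest $z^*$-values. Your explicit rearrangement-inequality argument for matching the selected elements to positions is a detail the paper leaves implicit, but it does not change the approach.
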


\begin{proof} Suppose that $m$ and $r$ are given as above, and that $S=F\cup G$ with $G\subset \{k(p-s_m)+1,\ldots,n\}$. With this choice of $S$, we must have that $q_j=k(j)+1$ for all $1\leq j\leq r$ (note (ii)). As a result, $\alpha_{q_j}$ in (\ref{coeff}) simplifies to $\alpha_{q_j}=\min\{\alpha_{q_{j-1}}, h_{k(j)+1}-h_{m+1}\}$ for $2\leq j\leq r$. Moreover, for $q_i\in G\subset \{k(p-s_m)+1,\ldots,n\}$, we have $q_i> k(p-s_m)\geq k(j)$ for all $r+1\leq j\leq p-s_m$. Hence, $\alpha_{q_j}=\min\{\alpha_{q_{j-1}}, h_{k(j)+1}-h_{m+1}-\sum_{i=r+1}^{j-1}\alpha_{q_i}\}$ for $r+1\leq j\leq p-s_m$. Observe that, assuming $S=F\cup G$, the coefficients $\alpha_{q_j}$ do not depend on a particular choice of $G$, but depend only on $\alpha_r$.

Let $(y^*,z^*)\in \mathbb{R}_+\times \mathbb{R}^n_+$. We now give an algorithm to to identify the most violated inequality (\ref{fdieg1}) with $S=F\cup G$ and $G\subset \{k(p-s_m)+1,\ldots,n\}$. Note that the problem of finding the best $T$ in inequalities (\ref{fdieg1}) can be solved as a shortest path problem on a directed acyclic graph with $O(p^2)$ arcs. For details, see \cite{kucukyavuz2012}. Note that we always include a $t_i$ in $T$ for which $h_{t_i}=h_1$.

To find the set $G$ that gives the most violated inequality (\ref{fdieg1}) in the desired form, we keep an ordered list of the elements in $\{k(p-s_m)+1,\ldots,n\}$, denoted by $Z$, in decreasing order of $z^*_j$ for $k(p-s_m)+1\leq j\leq n$ and we choose the first $p-s_m-r$ elements in $Z$ to be in the set $G$.

As a result, the most violated inequality $(\ref{fdieg1})$ with $m$ as above and $S=F\cup G$ with $G\subset \{k(p-s_m)+1,\ldots,n\}$ in $O(p^3)$. \end{proof}

\begin{crly}\label{sepoverfdisgen} Take $0\leq m\leq \nu$. Let $A_m:=\{m+2\leq j\leq n: a_j=1\}$. Suppose that $a_j\leq s_m$ for all $j\in [n]$, and $k(1)<k(2)<\cdots<k(p-s_m)$. Then we can find the most violated inequality $(\ref{fdieg1})$ with $m$ as above and $S=\{k(1)+1,\ldots,k(r)+1\}\cup G$ with $G\subset \{k(p-s_m)+1,\ldots,n\}$ over all $0\leq r\leq p-s_m$ in $O(p^4)$. \end{crly}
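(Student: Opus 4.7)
The plan is to treat Theorem \ref{sepoverfdis} as a black-box subroutine and enumerate the outer parameter $r$. For each integer $r\in\{0,1,\ldots,p-s_m\}$ such that $F:=\{k(1)+1,\ldots,k(r)+1\}$ is contained in $A_m$---a hypothesis of Theorem \ref{sepoverfdis} and equivalent to condition (iii) of Theorem \ref{explicitfdi} on the $F$-part of $S$---I would invoke that theorem with parameters $(m,r)$. This returns, in time $O(p^3)$, the most violated inequality of the form $(\ref{fdieg1})$ whose $S$ decomposes as $F\cup G$ with $G\subset\{k(p-s_m)+1,\ldots,n\}$. The remaining hypotheses of Theorem \ref{sepoverfdis}, namely $a_j\le s_m$ for all $j\in [n]$ and the strict chain $k(1)<\cdots<k(r+1)$, are inherited directly from the hypotheses of the corollary.

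After these calls, I output the most violated inequality found across the at most $p-s_m+1=O(p)$ iterations. Correctness is immediate: every candidate inequality of the prescribed shape, for some admissible $r$, is examined in exactly one iteration of the loop. The total running time is therefore $O(p)\cdot O(p^3)=O(p^4)$, matching the claim.

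No substantive new obstacle appears: essentially all of the combinatorial work---the shortest-path computation for the best $T$, the greedy sort-and-pick for the best $G$, and the recursion defining the coefficients $\alpha_{q_j}$---is already performed inside Theorem \ref{sepoverfdis}. The only bookkeeping step is a one-time $O(n)$ preprocessing pass to identify $A_m$ and the indices $k(1),\ldots,k(p-s_m)$, after which each black-box call incurs no extra overhead beyond the stated $O(p^3)$. One could in principle sharpen the bound by sharing the sorted list of elements of $\{k(p-s_m)+1,\ldots,n\}$ across iterations (it does not depend on $r$) and recomputing only the single coefficient $\alpha_{q_r}$ that changes when $r$ is incremented, but this optimization is not needed to reach the stated $O(p^4)$ complexity.
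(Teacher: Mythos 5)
Your proposal is correct and is precisely the argument the paper intends: the corollary is stated without proof as an immediate consequence of Theorem \ref{sepoverfdis}, obtained by looping over the $O(p)$ admissible values of $r$, invoking the $O(p^3)$ subroutine for each, and returning the best inequality found, for a total of $O(p^4)$. Your additional care in restricting to those $r$ with $F\subset A_m$ (so that condition (iii) of Theorem \ref{explicitfdi} holds on the $F$-part of $S$) is consistent with the theorem's hypotheses and fills in the only detail the paper leaves implicit.
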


\subsection{Proof of Theorem \ref{explicitfdi}}\label{prooffdi}

As we will see, the function $f_k$ plays a central role in the proof of Theorem \ref{explicitfdi}. In the following lemma, which is needed for the proof, we compute $f_k$ for each $k$ under the nice assumptions on $S$ given in Theorem \ref{explicitfdi}. Recall that $s_k=\sum_{j=1}^ka_j$ for all $1\leq k\leq n$, and $s_0=0$.

\begin{lma} Let $R\cup S$ be a partition of $[n]$, $\alpha\in \mathbb{R}^n$ and $0\leq m\leq \nu$ such that the following are satisfied: \begin{enumerate}

\item[(i)] $p-s_m$ is an integer, $|S|=p-s_m$ and $a_j=1$ for all $j\in S$;

\item[(ii)] if $i\in R$ then $\alpha_i\geq 0$, and if $i\in S$ then $\alpha_i\leq 0$;

\item[(iii)] $S\subset \{m+2,\ldots, n\}$ and $\alpha_{q_1}\geq \cdots\geq \alpha_{q_{|S|}}$ for some permutation $(q_1,\ldots,q_{|S|})$ on $S$;

\item[(iv)] $i> s_k-s_m$ implies that $q_i>k$ for all $i\in \{1,\ldots,p-s_m\}$ and $k\in \{m+1,\ldots,\nu\}$.\end{enumerate} Then $$f_k(\alpha)=\left\{\begin{array}{ll} \sum_{j\in S}\alpha_j ~~~~~~~~~~~~~~~~~~ \text{ if } 0\leq k\leq m; \\ \sum{(\alpha_{q_i}:i> s_k-s_m)} ~~~\text{ if } m+1\leq k\leq \nu.\end{array} \right.$$\end{lma}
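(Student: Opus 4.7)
The plan is to reduce the knapsack problem defining $f_k(\alpha)$ in (\ref{f}) to a cardinality-constrained selection over the set $S$, then read off the minimum directly from the sorted list of $\alpha_{q_i}$'s.

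First I would use hypotheses (i) and (ii) to eliminate the variables indexed by $R$. For any $z \in \mathcal{P}_k$, flipping $z_j$ from $1$ to $0$ for any $j \in R$ with $j > k$ only relaxes the knapsack constraint (since $a_j \geq 0$) and only decreases the objective (since $\alpha_j \geq 0$). Hence there is an optimal solution supported on $S_k := S \cap \{k+1,\ldots,n\}$, and by (i) the constraint restricted to $S_k$ becomes the cardinality constraint $\sum_{j \in S_k} z_j \leq p - s_k$. Since $\alpha_j \leq 0$ for $j \in S$ by (ii), we conclude
\[
f_k(\alpha) = \min\Bigl\{\sum_{j \in T}\alpha_j : T \subseteq S_k,\ |T| \leq p - s_k\Bigr\},
\]
and any optimal $T$ selects as many indices of $S_k$ as possible, taking those with the smallest (most negative) $\alpha$-values.

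For the range $0 \leq k \leq m$, the containment $S \subseteq \{m+2,\ldots,n\} \subseteq \{k+1,\ldots,n\}$ from (iii) gives $S_k = S$, while $s_k \leq s_m$ implies $p - s_k \geq p - s_m = |S|$, so $T := S$ is feasible. It is optimal because any $T' \subsetneq S$ omits elements whose $\alpha$ is nonpositive, and therefore $\sum_{T'}\alpha_j \geq \sum_S \alpha_j$. This yields the first branch $f_k(\alpha) = \sum_{j \in S}\alpha_j$.

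For the range $m+1 \leq k \leq \nu$, let $T := \{q_i : i > s_k - s_m\}$. Condition (iv) guarantees $T \subseteq S_k$, and $|T| = |S| - (s_k - s_m) = p - s_k$ exhausts the budget, so $T$ is feasible with value $\sum_{i > s_k - s_m}\alpha_{q_i}$. The main obstacle is optimality, since (iv) is a one-way implication and so $S_k$ might contain extra elements $q_i$ with $i \leq s_k - s_m$. To handle this I would compare $T$ against an arbitrary feasible $T' \subseteq S_k$ by expanding $\sum_{T'}\alpha_j - \sum_T \alpha_j = \sum_{T' \setminus T}\alpha_j - \sum_{T \setminus T'}\alpha_j$. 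By (iii), every $q_{i'} \in T' \setminus T$ satisfies $i' \leq s_k - s_m$, so $\alpha_{q_{i'}}$ is at least as large as the $\alpha$-value of any $q_i \in T \setminus T'$ (which has $i > s_k - s_m$). Since $|T| \geq |T'|$, one can pair each element of $T' \setminus T$ injectively with an element of $T \setminus T'$, producing a sum of nonnegative pair differences; the remaining unpaired elements of $T \setminus T'$ all have $\alpha \leq 0$ and contribute further nonnegatively to the difference. Hence $\sum_{T'}\alpha_j \geq \sum_T \alpha_j$, confirming the second branch and completing the computation.
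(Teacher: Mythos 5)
Your proposal is correct and follows essentially the same route as the paper: both identify the optimum of $f_k$ as selecting the most negative $\alpha$-values in $S$ permitted by the budget $p-s_k$, use (iv) and the feasibility of the corresponding $0/1$ point for the upper bound, and use the sortedness in (iii) together with the cardinality count $|\{j\in S: j>k, z_j=1\}|\leq p-s_k$ for the lower bound. The only cosmetic difference is that you first reduce explicitly to a cardinality-constrained problem over $S_k$ and argue optimality by an exchange/pairing argument, whereas the paper bounds $\sum_{j\in S,\,j>k}\alpha_j z_j$ directly by the sum of the smallest admissible $\alpha_{q_i}$'s.
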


\begin{proof} If $0\leq k\leq m$, then $z^*\in \{0,1\}^{[n]}$ defined as $$z^*_j:=\left\{\begin{array}{ll} 1 ~~~ \text{ if } j\in S; \\ 0 ~~~ \text{ otherwise.} \end{array} \right.$$ is a feasible point for $\mathcal{P}_k$ since $$\sum_{j>k}a_jz^*_j=\sum_{j\in S,j>k}z^*_j=|S|= p-s_m\leq p-\sum_{j\leq k}a_j.$$ Hence, $$\sum_{j\in S}\alpha_j\leq f_k(\alpha)\leq \sum_{j\in S}\alpha_j$$ and so $f_k(\alpha)=\sum_{j\in S}\alpha_j$. Now choose $m+1\leq k\leq \nu$. We will first find a lower bound for $f_k(\alpha)$. Let $z\in \mathcal{P}_k$. Then \begin{align*}
|\{j\in S:j> k,z_j=1\}|=\sum_{j\in S,j>k}z_j=\sum_{j\in S,j>k}a_jz_j \leq p-\sum_{j\leq k}a_j &=p-s_k\\
&=|S|-s_k+s_m\\
&= |\{q_i\in S:i>s_k-s_m\}|.\end{align*} Observe that $i\geq s_k-s_m$ implies that $q_i>k$. As a result, since $\alpha_{q_1}\geq \cdots\geq \alpha_{q_{|S|}}$, we get that $$\sum_{j\in S,j>k}\alpha_jz_j\geq \sum{\left(\alpha_{q_i}:i> s_k-s_m\right)}.$$ Since this is true for all $z\in \mathcal{P}_k$, it follows that $$f_k(\alpha)\geq \sum{\left(\alpha_{q_i}:i> s_k-s_m\right)}.$$ Furthermore, we claim that equality holds above. Define $z^*\in \{0,1\}^{[n]}$ as follows: for $q_i\in S$ let $$z^*_{q_i}:=\left\{\begin{array}{ll} 1 ~~~ \text{ if } i> s_k-s_m \text{ or } q_i\leq k; \\ 0 ~~~ \text{ otherwise,} \end{array} \right.$$ and for $i\in R$ let $z^*_i:=0$. We have \begin{align*}
\sum_{i>k}a_iz^*_i=\sum_{q_i>k}z^*_{q_i}
&=\sum (z^*_{q_i}:q_i>k, i>s_k-s_m)\\
&=\sum (z^*_{q_i}: i>s_k-s_m)~\text{ since } i>s_k-s_m \text{ implies } q_i>k\\
&= |S|-(s_k-s_m)\\
&=p-s_k=p-\sum_{j\leq k}a_j.\end{align*} Hence, $z^*\in \mathcal{P}_k$. However, $$f_k(\alpha)\leq \sum_{j\in S,j>k}\alpha_jz^*_j=\sum{\left(\alpha_{q_i}:i> s_k-s_m\right)}\leq f_k(\alpha)$$ and so $$f_k(\alpha)= \sum{\left(\alpha_{q_i}:i> s_k-s_m\right)}.$$ Hence, we are done. \end{proof}

Now we are ready to prove Theorem \ref{explicitfdi}. We restate Theorem \ref{explicitfdi} for convenience.

\noindent {\bf Restatement of Theorem \ref{explicitfdi}.} Take an integer $0\leq m\leq \nu$ such that $p-s_m$ is an integer. For each $1\leq j\leq p-s_m$, let $k(j):=\max\{k:j\geq s_k-s_m\}$. Let \begin{enumerate}

\item[(i)] $T:=\{t_1,\ldots,t_a\}\subset \{1,\ldots,m\}$ where $t_1<\ldots <t_a$;

\item[(ii)] $S:=\{q_1,\ldots,q_s\}\subset \{m+2,\ldots,n\}$ where $s=p-s_m$ and $q_j> k(j)$ for all $1\leq j \leq p-s_m$; and

\item[(iii)] $S$ is chosen so that $a_j=1$ for all $j\in S$, and $a_j\leq s_m$ for all $j\notin S$. \end{enumerate}  Set $t_{a+1}=m+1$. Let $\alpha_{q_1}:=h_{k(1)+1}-h_{m+1}$, and for $2\leq j\leq p-s_m$, define $$\alpha_{q_j}:= \min\left\{\alpha_{q_{j-1}},  h_{k(j)+1}-h_{m+1}-\sum{\left(\alpha_{q_i}:q_i> k(j),i<j\right)}\right\}.$$ Then
\begin{eqnarray} y+\sum_{j=1}^{a}(h_{t_j}-h_{t_{j+1}}) z_{t_j}+\sum_{i\in S}\alpha_iz_i \geq h_{t_1}+\sum_{i\in S}\alpha_i \label{fdieg} \end{eqnarray} is a valid inequality for conv$(\mathcal{Q})$. Furthermore, $(\ref{fdieg})$ is a facet-defining inequality for conv$(\mathcal{Q})$ if and only if $h_{t_1}=h_1$.

\begin{proof} Let $R=[n]\setminus S$ and define, for $i\in R$, $$\alpha_i:=\left\{\begin{array}{ll} h_{t_j}-h_{t_{j+1}} ~~~ \text{ if } i=t_j \text{ for some } 1\leq j\leq m; \\ 0 ~~~~~~~~~~~~~~~ \text{ otherwise.} \end{array} \right.$$ We first show that $(\alpha, h_{t_1}+\sum_{i\in S}\Delta_i)\in \mathcal{G}$. Observe that if $i\in R$ then $\alpha_i\geq 0$, and if $i\in S$ then $\alpha_i\leq 0$. Also, note that $\alpha_{q_1}\geq \cdots\geq \alpha_{q_{|S|}}$. Let $0\leq k\leq \nu$. If $0\leq k\leq m$, then by the previous lemma, $f_k(\alpha)=\sum_{i\in S}\alpha_i$. Suppose that $t_j< k+1\leq t_{j+1}$ for some $j\in \{0,1,\ldots,a\}$ where $t_0:=0$. Then $$\sum_{i\leq k}\alpha_i+f_k(\alpha)-h_{t_1}-\sum_{i\in S}\alpha_i= \sum_{i=1}^{j}(h_{t_i}-h_{t_{i+1}})-h_{t_1}=h_{t_1}-h_{t_{j+1}}-h_{t_1}\geq -h_{k+1}.$$ Otherwise, assume that $m+1\leq k\leq \nu$. Note that in this case we have $$\sum_{i\in R,i\leq k}\alpha_i=\sum_{i\in T}\alpha_i=h_{t_1}-h_{m+1}.$$ Also, by the previous lemma, we have $$f_k(\alpha)= \sum{\left(\alpha_{q_i}:i> s_k-s_m\right)}.$$ Then \begin{align*}
&\sum_{i\in R,i\leq k}\alpha_i+\sum_{i\in S,i\leq k}\alpha_i+f_k(\alpha)-h_1-\sum_{i\in S}\alpha_i\\
&=h_{t_1}-h_{m+1}+\sum{\left(\alpha_{q_i}:q_i\leq k\right)} +\sum{\left(\alpha_{q_i}:i> s_k-s_m\right)}-h_{t_1}-\sum_{i\in S}\alpha_i\\
&=-h_{m+1}-\alpha_{q_j}- \sum{\left(\alpha_{q_i}:q_i>k,i<j\right)} ~~\text{ for } j=s_k-s_m\\
&\geq -h_{k+1}.\end{align*} As a result, $(\alpha, h_{t_1}+\sum_{i\in S}\alpha_i)\in \mathcal{G}$ and so by Theorem \ref{validthm}, we get that (\ref{fdieg}) is a valid inequality for conv$(\mathcal{Q})$.

Observe that if (\ref{fdieg}) is facet-defining for conv$(\mathcal{Q})$, then by Theorem \ref{genfdithm} (ii), we must have that $h_{t_1}+\sum_{i\in S}\alpha_i=h_1+f_0(\alpha)$. However, by the previous lemma, we know that $f _0(\alpha)=\sum_{i\in S}\alpha_i$. Hence, $h_{t_1}=h_1$ is a necessary condition for (\ref{fdieg}) to be facet-defining. Conversely, assume that $h_{t_1}=h_1$. We will find $n+1$ affinely independent points in $\mathcal{Q}$ that satisfy (\ref{fdieg}) at equality.

For each $k:=t_j\in T$, let $y^k=h_{t_j}$ and define $z^k\in \{0,1\}^{[n]}$ as follows: $$z^k_i:=\left\{\begin{array}{ll} 1 ~~~ \text{ if } i< k \text{ or } i\in S; \\ 0 ~~~ \text{ otherwise.} \end{array} \right.$$ Note that $z^k_i=1$ for all $i<k$. Also, we have $$\sum_{i=1}^{n}a_iz^k_i= \sum_{i<k}a_iz^k_i+ \sum_{i\in S}z^k_i=s_{k-1}+|S|=s_{k-1}+p-s_m\leq p.$$ Hence, $(y^k,z^k)\in \mathcal{Q}$. Moreover, \begin{align*}
y^k+\sum_{i=1}^{a}(h_{t_i}-h_{t_{i+1}}) z^k_{t_i}+\sum_{i\in S}\alpha_iz^k_i=h_{t_j}+\sum_{t_i<t_j}(h_{t_i}-h_{t_{i+1}})+\sum_{i\in S}\alpha_i&=h_{t_j}+h_1-h_{t_j}+\sum_{i\in S}\alpha_i\\
&=h_1+\sum_{i\in S}\alpha_i.\end{align*}

For each $k:=q_j\in S$, define $$\ell(j):=\max\left\{1\leq \ell\leq j:\alpha_{q_j}=h_{k(\ell)+1}-h_{m+1}-\sum{\left(\alpha_{q_i}:q_i> k(\ell),i<\ell\right)} \right\}.$$ Now let $y^k:=h_{k(\ell(j))+1}$ and define $z^k\in \{0,1\}^{[n]}$ as follows: $$z^k_t:=\left\{\begin{array}{ll}  0 ~~~ \text{ if } t=q_i \text{ and } q_i>k(\ell(j)) \text{ and } i<\ell(j), \text{ or } t=q_j, \text{ or } t\in R \text{ and } t>k(\ell(j));\\ 1 ~~~ \text{ otherwise.} \end{array} \right.$$ Note that $z^k_i=1$ for all $i\leq k(\ell(j))$. Also, we have \begin{align*}
\sum_{i=1}^{n}a_iz^k_i&= \sum_{i\leq k(\ell(j))}a_i+
|\{q_i\in S:q_i>k(\ell(j)), i\geq \ell(j), i\neq j \}|\\
&=s_{k(\ell(j))}+|\{q_i\in S:q_i>k(\ell(j)), i\geq \ell(j) \}|-1\\
&=s_{k(\ell(j))}+|\{q_i\in S:i\geq \ell(j) \}|-1\\
&=s_{k(\ell(j))}+|S|-\ell(j)\\
&=s_{k(\ell(j))}+p-s_m-\ell(j)\\
&\leq p ~~\text{ by definition of } k(\cdot).\end{align*} Hence, $(y^k,z^k)\in \mathcal{Q}$. Moreover, \begin{align*}
& y^k+\sum_{i=1}^{a}(h_{t_i}-h_{t_{i+1}}) z^k_{t_i}+\sum_{i=1}^{s}\alpha_{q_i}z^k_{q_i}\\
&=h_{k(\ell(j))+1}+ h_1-h_{m+1}+\sum_{i\in S}\alpha_i-\Delta_{q_j}-\sum {\left(\alpha_{q_i}:q_i> k(\ell(j)),i<\ell(j)\right)} \\
&= h_{k(\ell(j))+1}+ h_1-h_{m+1}+\sum_{i\in S}\alpha_i+h_{m+1}-h_{k(\ell(j))+1}\\
&=h_1+\sum_{i\in S}\alpha_i.\end{align*}

For all $k\in R\setminus T$, let $y^k:=h_1$ and define $z^k\in \{0,1\}^{[n]}$ as follows: $$z^k_i:=\left\{\begin{array}{ll}  0 ~~~ \text{ if } i\in R\setminus \{k\};\\ 1 ~~~ \text{ otherwise.} \end{array} \right.$$ We have $$\sum_{i=1}^{n}a_iz^k_i= a_k+|S|=a_k+p-s_m\leq p ~~~ \text{ by (iii).}$$  Hence, $(y^k,z^k)\in \mathcal{Q}$. Moreover, $$y^k+\sum_{i=1}^{a}(h_{t_i}-h_{t_{i+1}}) z^k_{t_i}+\sum_{i\in S}\alpha_iz^k_i=h_1+\sum_{i\in S}\alpha_i.$$

Lastly, let $y^0=h_{m+1}$ and define $z^0\in \{0,1\}^{[n]}$ as follows: $$z^0_i:=\left\{\begin{array}{ll} 1 ~~~ \text{ if } i< m+1 \text{ or } i\in S; \\ 0 ~~~ \text{ otherwise.} \end{array} \right.$$ Note that $z^0_i=1$ for all $i<m+1$. Also, we have $$\sum_{i=1}^{n}a_iz^k_i= \sum_{i<m+1}a_iz^k_i+ \sum_{i\in S}z^k_i=s_{m}+|S|=s_m+p-s_m= p.$$ Hence, $(y^0,z^0)\in \mathcal{Q}$. Moreover, \begin{align*}
y^0+\sum_{i=1}^{a}(h_{t_i}-h_{t_{i+1}}) z^0_{t_i}+\sum_{i\in S}\alpha_iz^0_i&=h_{m+1}+\sum_{i=1}^{a}(h_{t_i}-h_{t_{i+1}})+\sum_{i\in S}\alpha_i\\
&=h_{m+1}+h_1-h_{m+1}+\sum_{i\in S}\alpha_i\\
&=h_1+\sum_{i\in S}\alpha_i.\end{align*}

Hence, the face defined by (\ref{fdieg}) contains $z^0,z^1,\ldots,z^{n}$, which are $n+1$ affinely independent points in $\mathcal{Q}$. As a result, (\ref{fdieg}) is a facet-defining inequality for conv($\mathcal{Q}$).\end{proof}

\section{Heuristic Separation over $\text{conv}(\mathcal{Q})$}\label{heuristicsep}

In this section, we give a polynomial time algorithm that separates over a subset of inequalities of the type $$ y+\sum_{j\in [n]}\alpha_jz_j\geq \beta~~~~\forall~\alpha\in \mathbb{R}^n:\alpha_j\geq 0 \text{ if and only if } j\in R,$$ for a fixed subset $R$ of $[n]$. When $a_i=a_j$ for all $i,j\notin R$, this separation is exact.

Let $R\cup S$ be a partition of $[n]$. Define the polyhedron $$\mathcal{G}(R):=\left\{(\delta,\Delta,h)\in \mathbb{R}^R_+\times \mathbb{R}^S_+\times \mathbb{R}: (\ref{Gconst})\right\}$$ where \begin{align}
\sum_{j\in R,j\leq k}\delta_j+ \sum_{j\in S,j> k}\Delta_j(1-z^*_j) +h_{k+1}&\geq h ~~~~\forall z^*\in \mathcal{P}_k, \forall ~0\leq k\leq \nu.\label{Gconst} \end{align} The following theorem explains the importance of the polyhedron $\mathcal{G}(R)$.

\begin{thm}\label{RSvalidthm} Let $R\cup S$ be a partition of $[n]$. Choose $(\delta,\Delta,h)\in \mathbb{R}_+^R\times \mathbb{R}_+^S\times \mathbb{R}$. Then $$y+\sum_{j\in R}\delta_jz_j+\sum_{j\in S}\Delta_j(1-z_j)\geq h$$ is a valid inequality for $\emph{conv}(\mathcal{Q})$ if and only if $(\delta,\Delta,h)\in \mathcal{G}(R)$.\end{thm}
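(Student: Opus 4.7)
The plan is to derive Theorem \ref{RSvalidthm} directly from Theorem \ref{validthm} by an affine change of variables, followed by a small observation that exploits the sign condition $\delta\in\mathbb{R}^R_+$.

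First, I would rewrite the candidate inequality in the form $(\ref{gvdi2})$. Setting $\alpha_j:=\delta_j$ for $j\in R$, $\alpha_j:=-\Delta_j$ for $j\in S$, and $\beta:=h-\sum_{j\in S}\Delta_j$, the inequality $y+\sum_{j\in R}\delta_j z_j+\sum_{j\in S}\Delta_j(1-z_j)\geq h$ becomes exactly $y+\sum_{j\in [n]}\alpha_j z_j\geq \beta$. By Theorem \ref{validthm}, this is valid for $\text{conv}(\mathcal{Q})$ if and only if $(\alpha,\beta)\in\mathcal{G}$, i.e.\ if and only if
$$\sum_{j\leq k}\alpha_j+\sum_{j>k}\alpha_j z^*_j+h_{k+1}\geq\beta\qquad\forall\,z^*\in\mathcal{P}_k,\ \forall\,0\leq k\leq\nu.$$
Substituting the definitions of $\alpha$ and $\beta$ and rearranging (the $S$-terms on both sides combine into $(1-z^*_j)$ factors), this is equivalent to
\begin{equation}
\sum_{j\in R,\,j\leq k}\delta_j+\sum_{j\in R,\,j>k}\delta_j z^*_j+\sum_{j\in S,\,j>k}\Delta_j(1-z^*_j)+h_{k+1}\geq h\qquad\forall\,z^*\in\mathcal{P}_k,\ \forall\,0\leq k\leq\nu.\tag{$\ast$}
\end{equation}
So it suffices to prove that $(\ast)$ is equivalent to $(\ref{Gconst})$, i.e.\ that one may drop the middle sum $\sum_{j\in R,\,j>k}\delta_j z^*_j$ without changing the feasible set.

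For the direction $(\ref{Gconst})\Rightarrow(\ast)$, just note that $\delta_j\geq 0$ for $j\in R$ and $z^*_j\geq 0$, so the middle sum is nonnegative and can be added to $(\ref{Gconst})$ to give $(\ast)$. For the converse, fix $0\leq k\leq \nu$ and $z^*\in\mathcal{P}_k$, and define $\tilde z\in\{0,1\}^n$ by $\tilde z_j:=0$ for $j\in R$ with $j>k$, and $\tilde z_j:=z^*_j$ otherwise. Since $a\geq 0$, we have $\sum_{j>k}a_j\tilde z_j\leq \sum_{j>k}a_j z^*_j\leq p-\sum_{j\leq k}a_j$, so $\tilde z\in\mathcal{P}_k$. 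Applying $(\ast)$ to $\tilde z$ kills the middle term (all $\tilde z_j=0$ on $R\cap\{j>k\}$) and leaves the $S$-sum unchanged (since $\tilde z$ agrees with $z^*$ on $S$), so $(\ref{Gconst})$ holds for $z^*$.

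The only step that takes a moment is the algebraic manipulation turning the $\mathcal{G}$-constraint into $(\ast)$, where the identity $-\sum_{j\in S}\Delta_j - (-\sum_{j\in S,\,j\leq k}\Delta_j) - \sum_{j\in S,\,j>k}\Delta_j z^*_j = -\sum_{j\in S,\,j>k}\Delta_j(1-z^*_j)\cdot(-1)$ must be tracked carefully; conceptually, however, the proof is a two-line reduction to Theorem \ref{validthm}, with the nonnegativity hypothesis on $\delta$ being exactly what is needed to replace a universally quantified inequality over $\mathcal{P}_k$ by its tightest instance obtained by zeroing out the $R$-coordinates beyond $k$.
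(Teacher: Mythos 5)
Your proof is correct. The paper omits its own argument for Theorem \ref{RSvalidthm}, saying only that it is ``very similar to that of Theorem \ref{validthm}'' --- i.e.\ the intended route is to redo the direct argument: for necessity, evaluate the inequality at the points $(h_{k+1},z^*)$ with $z^*\in\mathcal{P}_k$ and $z^*_j=1$ for $j\le k$; for sufficiency, take an arbitrary $(y^*,z^*)\in\mathcal{Q}$ with $h_k>y^*\ge h_{k+1}$ and bound from below. You instead reduce to Theorem \ref{validthm} by the substitution $\alpha_j=\delta_j$ on $R$, $\alpha_j=-\Delta_j$ on $S$, $\beta=h-\sum_{j\in S}\Delta_j$, and then prove that the resulting system $(\ast)$ is equivalent to $(\ref{Gconst})$. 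This is a genuinely different and arguably more honest route, because it isolates the one point that the ``similar proof'' remark glosses over: the constraints $(\ref{Gconst})$ are \emph{not} the literal images of the constraints $(\ref{Gconstp})$ under the substitution --- the term $\sum_{j\in R,\,j>k}\delta_j z^*_j$ is missing --- and reconciling the two requires both the hypothesis $\delta\in\mathbb{R}^R_+$ (to drop the term) and the downward closedness of $\mathcal{P}_k$ under $a\ge 0$ (to zero out the $R$-coordinates beyond $k$ and recover the dropped constraint). Your handling of both directions of that equivalence is correct, and the algebra turning the $S$-terms into $(1-z^*_j)$ factors checks out. What the reduction buys is reuse of Theorem \ref{validthm} verbatim and an explicit identification of where the sign condition on $\delta$ enters; what the paper's direct route buys is that it never needs the equivalence of the two constraint systems at all, since it derives $(\ref{Gconst})$ from scratch.
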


\begin{proof} The proof is very similar to that of Theorem \ref{validthm} and is therefore omitted. \end{proof}

Again, we would like to point out that $\mathcal{G}(R)$ in general has exponentially many constraints. However, $\mathcal{G}(R)$ has an alternate formulation with $O(n)$ non-linear constraints: for $\Delta\in \mathbb{R}_+^S$ and $k\in \mathbb{Z}$, let $$g_k(\Delta):=\min \left\{\sum_{j\in S,j> k}\Delta_j(1-z_j):z\in \mathcal{P}_k \right\}. $$ Then we have $$\mathcal{G}(R)=\left\{(\delta,\Delta,h)\in \mathbb{R}_+^R\times \mathbb{R}_+^S\times \mathbb{R}: \sum_{j\in R,j\leq k}\delta_j+g_k(\Delta)+h_{k+1} \geq h ~\forall ~0\leq k\leq \nu \right\}.$$ We will now find a polyhedron that is contained in $\mathcal{G}(R)$ and is equal to $\mathcal{G}(R)$ when $a_i=a_j$ for all $i,j\in S$, and can be described efficiently.

\begin{ntn} Let $\Delta=(\Delta_j:j\in S)\subset \mathbb{N}$ where $S$ is some index set, and let $k,l\in \mathbb{N}$. Define $\Delta [k,l]$ to be the sum of the smallest $l-|\{j\in S:j\leq k\}|$ elements in $\{\Delta_j: j> k\}$. \end{ntn}

Let $R\cup S$ be a partition of $[n]$, and suppose that $m_S:=\min\{a_j:j\in S\}>0$.

\begin{obs}\label{approxobs} Given $\Delta\in \mathbb{R}^S_+$ and $0\leq k\leq \nu$, we have $$g_k(\Delta)\geq \Delta\left[k,|S|-\left \lfloor\frac{p-s_k}{m_S}\right\rfloor\right].$$ Moreover, the inequality is tight when $a_j=m_S$ for all $j\in S$. \end{obs}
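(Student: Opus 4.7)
The plan is to exploit the knapsack constraint defining $\mathcal{P}_k$ to upper bound the cardinality of $\{j\in S : j>k,\, z_j=1\}$, and then to use a greedy-rearrangement argument: since $\Delta\geq 0$ and we are minimizing $\sum_{j\in S,\,j>k}\Delta_j(1-z_j)$, the minimum is achieved by placing the forced zeros of $z$ on the indices with the \emph{smallest} $\Delta_j$ values.

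For the lower bound, I would fix an arbitrary $z\in\mathcal{P}_k$. Using $a_j\geq m_S>0$ for $j\in S$ together with $a_jz_j\geq 0$ for $j\in R$, the defining inequality of $\mathcal{P}_k$ gives
\[ m_S\sum_{j\in S,\,j>k}z_j \;\leq\; \sum_{j>k}a_jz_j \;\leq\; p-s_k, \]
and since the leftmost sum is a nonnegative integer it follows that $\sum_{j\in S,\,j>k}z_j\leq \lfloor (p-s_k)/m_S\rfloor$. Hence at least $|\{j\in S:j>k\}|-\lfloor(p-s_k)/m_S\rfloor$ of the indicators $(1-z_j)$ for $j\in S$, $j>k$ are equal to $1$. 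Minimizing over which indices are forced to zero places them on the smallest $\Delta_j$'s, and the resulting value is by definition exactly $\Delta[k,\,|S|-\lfloor(p-s_k)/m_S\rfloor]$, since $l-|\{j\in S:j\leq k\}|$ with $l=|S|-\lfloor(p-s_k)/m_S\rfloor$ unwinds to $|\{j\in S:j>k\}|-\lfloor(p-s_k)/m_S\rfloor$.

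For tightness when $a_j=m_S$ for all $j\in S$, the plan is to construct an explicit minimizer. Let $J^{\ast}\subseteq\{j\in S:j>k\}$ be a set of size $\min\{\lfloor(p-s_k)/m_S\rfloor,\,|\{j\in S:j>k\}|\}$ consisting of indices with the \emph{largest} $\Delta_j$ values, set $z^{\ast}_j=1$ for $j\leq k$ and for $j\in J^{\ast}$, and $z^{\ast}_j=0$ otherwise. Feasibility follows because $\sum_{j>k}a_jz^{\ast}_j=m_S|J^{\ast}|\leq p-s_k$, so $z^{\ast}\in\mathcal{P}_k$; evaluating the objective at $z^{\ast}$ gives the sum of the remaining (smallest) $\Delta_j$'s, matching the lower bound.

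I do not anticipate any substantive obstacle: the argument is essentially a counting bound plus a rearrangement inequality, and the tight instance is produced by an explicit construction. The only item requiring mild care is the degenerate case $\lfloor(p-s_k)/m_S\rfloor\geq |\{j\in S:j>k\}|$, in which the count $l-|\{j\in S:j\leq k\}|$ is nonpositive and $\Delta[k,l]$ is the empty sum $0$; here the inequality is vacuous, and the construction of $z^{\ast}$ still yields feasibility and objective value $0$ after truncating $J^{\ast}$ to all of $\{j\in S:j>k\}$.
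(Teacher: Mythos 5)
Your proposal is correct and follows essentially the same route as the paper's own proof: the counting bound $\sum_{j\in S,\,j>k}z_j\leq\lfloor(p-s_k)/m_S\rfloor$ from the knapsack constraint, the rearrangement step identifying the minimum with the sum of the smallest $\Delta_j$'s, and the explicit tight point placing the $z_j=1$'s on the largest $\Delta_j$'s when $a_j=m_S$ on $S$. The only (welcome) addition is your explicit handling of the degenerate case where the floor exceeds $|\{j\in S:j>k\}|$, which the paper leaves implicit.
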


\begin{proof} Choose $z\in \mathcal{P}_k$. We have that $$|\{j\in S:j>k, z_j=1\}|= \sum_{j\in S, j>k}z_j\leq \frac{1}{m_S}\sum_{j\in S, j>k}a_jz_j\leq \frac{1}{m_S}\left(p-\sum_{j\leq k}a_j\right)=\frac{p-s_k}{m_S}.$$ Therefore, \begin{align*}
|\{j\in S:j> k, z_j=0\}|&=|\{j\in S:j>k\}|-|\{j\in S:j>k, z_j=1\}|\\
&\geq |S|-|\{j\in S:j\leq k\}|-\left\lfloor\frac{p-s_k}{m_k}\right\rfloor.\end{align*} As a result, we obtain that $\sum_{j\in S, j>k}\Delta_j(1-z_j)\geq \Delta\left[k,|S|-\left \lfloor\frac{p-s_k}{m_k}\right\rfloor\right].$ Since this is true for all $z\in \mathcal{P}_k$, it follows that $$g_k(\Delta)\geq \Delta\left[k,|S|-\left \lfloor\frac{p-s_k}{m_k}\right\rfloor\right],$$ as claimed. In the case when $a_j=m_S$ for all $j\in S$, consider the point $z^*\in \{0,1\}^n$ defined as follows: $z^*_j=1$ if $j\leq k$ or $\Delta_j$ corresponds to one of the largest $\left\lfloor \frac{p-s_k}{m_S}\right \rfloor$ elements in $\{\Delta_j:j\in S:j>k\}$, and $z^*_j=0$ otherwise. Then $z^*\in \mathcal{P}_k$ and $\sum_{j\in S, j>k}\Delta_j(1-z^*_j)=\Delta\left[k,|S|-\left \lfloor\frac{p-s_k}{m_k}\right\rfloor\right].$ Hence, when $a_j=m_S$ for all $j\in S$, we have that $g_k(\Delta)= \Delta\left[k,|S|-\left \lfloor\frac{p-s_k}{m_S}\right\rfloor\right].$\end{proof}

Define the polyhedron $$\mathcal{A}(R):=\left\{(\delta,\Delta,h)\in \mathbb{R}_+^R\times \mathbb{R}_+^S\times \mathbb{R}: (\ref{ARSconst})\right\}$$ where \begin{eqnarray}
\sum_{j\in R, j\leq k}\delta_j+ \Delta\left[k,|S|-\left \lfloor\frac{p-s_k}{m_S}\right\rfloor\right] +h_{k+1}\geq h ~~~~ \forall ~0\leq k\leq \nu.\label{ARSconst} \end{eqnarray} Note that Observation \ref{approxobs} implies that $\mathcal{A}(R)\subset \mathcal{G}(R)$, and that $\mathcal{A}(R)= \mathcal{G}(R)$ when $a_j=m_S$ for all $j\in S$.

For each $0\leq k\leq \nu$, let $\beta^k:=|S|-|\{j\in S:j\leq k\}|-\left\lfloor\frac{p-s_k}{m_S}\right\rfloor$. We have $$\begin{array}{ccl} \Delta\left[k,|S|-\left\lfloor\frac{p-s_k}{m_S}\right\rfloor\right] = &\min & \sum_{j\in S, j>k} \Delta_jx_j\\
&{\rm s.t.} & \sum_{j\in S,j>k}x_j=\beta^k, \\ & & x_j\in [0,1] ~~\forall ~j\in S,j>k.\end{array}$$ (Note that the constraint matrix of the above linear program is totally unimodular.) Hence, by LP duality, we obtain that $$\begin{array}{ccl} \Delta\left[k,|S|-\left\lfloor\frac{p-s_k}{m_S}\right\rfloor\right] = &\max & \beta^k\gamma^k+\sum_{j\in S,j>k} \rho^k_j\\
&{\rm s.t.} & \gamma^k+\rho^k_j\leq \Delta_j~~~~~~~\forall ~j\in S,j>k, \\ & & \rho^k_j\leq 0 ~~~~~~~~~~~~~~~~\forall ~j\in S,j>k.\end{array}$$ Now let $\rho^k_j=0$ for all $j\in S$ with $j\leq k$. So $\rho^k\in \mathbb{R}^S_-$ for all $0\leq k\leq \nu$. With this, define the polyhedron $$\Theta(R):=\left\{\left(\delta,\Delta,h, \left(\gamma^k\right),\left(\rho^k\right)\right)\in \mathbb{R}_+^R\times \mathbb{R}_+^S\times \mathbb{R} \times \mathbb{R}^{\nu+1}\times \mathbb{R}_-^{S\times {\nu+1}}: (\ref{thetaconst1})-(\ref{thetaconst3})\right\}$$ where \begin{align}
\sum_{j\in R, j\leq k}\delta_j+\beta^k\gamma^k+\sum_{j\in S} \rho^k_j+h_{k+1} &\geq h ~~~~~~~\forall ~0\leq k\leq \nu, \label{thetaconst1}\\
\gamma^k+\rho^k_j&\leq \Delta_j~~~~~~\forall ~j\in S,j>k, \forall ~0\leq k\leq \nu,\label{thetaconst2}\\
\rho^k_j&=0~~~~~~~~\forall~j\in S,j\leq k,\forall ~0\leq k\leq \nu.\label{thetaconst3} \end{align}

Hence, by weak and strong LP duality we get the following.

\begin{lma}\label{RSprojlemma} $\mathcal{A}(R)=\emph{proj}_{\mathbb{R}^R\times \mathbb{R}^S\times \mathbb{R}}\Theta(R)$. \end{lma}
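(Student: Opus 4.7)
The plan is to observe that the lemma is a direct consequence of LP duality applied coordinate-wise in $k$, exactly following the primal/dual pair spelled out in the paragraph preceding the lemma's statement. For each fixed $0\leq k\leq \nu$, the quantity
\[
\Delta\!\left[k,\,|S|-\left\lfloor\tfrac{p-s_k}{m_S}\right\rfloor\right]
\]
is the common optimal value of the explicit primal LP (minimizing $\sum_{j\in S,j>k}\Delta_j x_j$) and its dual (maximizing $\beta^k\gamma^k+\sum_{j\in S,j>k}\rho^k_j$) written just above the lemma. Since the primal constraint matrix is totally unimodular with integral right-hand side, and the primal LP is feasible and bounded whenever $0\leq \beta^k\leq |\{j\in S:j>k\}|$, both weak and strong LP duality are available.

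For the inclusion $\operatorname{proj}_{\mathbb{R}^R\times\mathbb{R}^S\times\mathbb{R}}\Theta(R)\subseteq \mathcal{A}(R)$, I would start from an arbitrary $(\delta,\Delta,h,(\gamma^k),(\rho^k))\in\Theta(R)$. Fix $k$. By (\ref{thetaconst2}) and the sign condition $\rho^k\leq 0$, the pair $(\gamma^k,(\rho^k_j)_{j>k})$ is feasible for the dual LP, while (\ref{thetaconst3}) guarantees that the terms $\rho^k_j$ for $j\leq k$ contribute nothing. Weak duality gives $\beta^k\gamma^k+\sum_{j\in S}\rho^k_j\leq \Delta[k,|S|-\lfloor(p-s_k)/m_S\rfloor]$, and substituting this bound into (\ref{thetaconst1}) recovers (\ref{ARSconst}).

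For the reverse inclusion $\mathcal{A}(R)\subseteq\operatorname{proj}\Theta(R)$, take $(\delta,\Delta,h)\in \mathcal{A}(R)$. For each $0\leq k\leq \nu$, apply strong LP duality to pick an optimal dual solution $(\gamma^k,(\rho^k_j)_{j>k})$ whose value equals $\Delta[k,|S|-\lfloor(p-s_k)/m_S\rfloor]$; extend by $\rho^k_j:=0$ for $j\in S$ with $j\leq k$ so that (\ref{thetaconst3}) holds by construction. Dual feasibility gives (\ref{thetaconst2}), and (\ref{thetaconst1}) then follows by substituting the strong-duality equality into (\ref{ARSconst}). Assembling these choices across all $k$ produces a lift of $(\delta,\Delta,h)$ into $\Theta(R)$.

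The only real obstacle is verifying applicability of strong duality, namely that the primal LP is nonempty and bounded for every $0\leq k\leq \nu$: the upper bound $\beta^k\leq |\{j\in S:j>k\}|$ is immediate from the definition of $\beta^k$, while $\beta^k\geq 0$ amounts to $\lfloor(p-s_k)/m_S\rfloor\leq |\{j\in S:j>k\}|$, which is exactly the regime in which $\Delta[k,\cdot]$ was defined in the notation preceding Observation~\ref{approxobs} (otherwise the constraint in $\mathcal{A}(R)$ for this $k$ is vacuous or trivially handled, and the corresponding $k$-block of $\Theta(R)$ can be set to zero). Beyond this bookkeeping, the proof is purely a pointwise invocation of weak and strong LP duality, which is why the result is stated as an immediate corollary.
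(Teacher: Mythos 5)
Your proof is correct and is essentially the paper's own argument: the paper states the lemma as an immediate consequence of the weak/strong LP duality pair set up just before it, and you have simply written out the two inclusions (weak duality for $\operatorname{proj}\Theta(R)\subseteq\mathcal{A}(R)$, strong duality to lift a point of $\mathcal{A}(R)$ into $\Theta(R)$) in full detail. No gaps.
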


This lemma implies the following theorem.

\begin{thm} Let $R\cup S$ be a partition of $[n]$ such that $m_S>0$. Let $(y^*,z^*)\in \mathbb{R}\times \mathbb{R}^n_+$ and \begin{eqnarray} LP^*:=\min\left\{\sum_{j\in R}\delta_jz^*_j+\sum_{j\in S}\Delta_j(1-z^*_j)- h:(\delta,\Delta,h)\in \mathcal{A}(R)\right\}.\label{RSsepopt}\end{eqnarray} If $y^*+LP^*<0$ and $(\delta^*,\Delta^*,h^*)$ is an optimal solution to $(\ref{RSsepopt})$, then $(y^*,z^*)\notin \emph{conv}(\mathcal{Q})$ and $y+\sum_{j\in R}\delta^*_jz_j+\sum_{j\in S}\Delta^*_j(1-z_j)\geq h^*$ is a valid inequality for $\emph{conv}(\mathcal{Q})$ which is violated by $(y^*,z^*)$. Furthermore, when $a_j=m_S$ for all $j\in S$, separation over all inequalities of the type \begin{eqnarray} y+\sum_{j\in R}\delta_jz_j+\sum_{j\in S}\Delta_j(1-z_j)\geq h~~~(\delta,\Delta,h)\in \mathbb{R}^R_+\times \mathbb{R}^S_+\times \mathbb{R},\label{RStype}\end{eqnarray} can be accomplished in polynomial time. \end{thm}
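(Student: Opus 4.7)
The plan is to piece together the earlier results: Observation \ref{approxobs} embeds $\mathcal{A}(R)$ in $\mathcal{G}(R)$, Theorem \ref{RSvalidthm} turns membership in $\mathcal{G}(R)$ into validity of a cut, and Lemma \ref{RSprojlemma} expresses $\mathcal{A}(R)$ as the projection of the polynomially-sized polyhedron $\Theta(R)$. There is essentially no new combinatorial content — the theorem is a clean corollary once these ingredients are in place.

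First, for the validity statement: since $(\delta^*,\Delta^*,h^*)$ is feasible for (\ref{RSsepopt}) it lies in $\mathcal{A}(R)$, and by Observation \ref{approxobs} we have $\mathcal{A}(R)\subseteq \mathcal{G}(R)$. Theorem \ref{RSvalidthm} then immediately yields that
\[
y+\sum_{j\in R}\delta^*_jz_j+\sum_{j\in S}\Delta^*_j(1-z_j)\geq h^*
\]
is a valid inequality for $\text{conv}(\mathcal{Q})$. For the violation: by definition $LP^*=\sum_{j\in R}\delta^*_jz^*_j+\sum_{j\in S}\Delta^*_j(1-z^*_j)-h^*$, so the hypothesis $y^*+LP^*<0$ rearranges exactly to
\[
y^*+\sum_{j\in R}\delta^*_jz^*_j+\sum_{j\in S}\Delta^*_j(1-z^*_j)<h^*.
\]
Thus $(y^*,z^*)$ violates a valid inequality of $\text{conv}(\mathcal{Q})$ and therefore cannot belong to $\text{conv}(\mathcal{Q})$.

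Second, for the polynomial-time separation when $a_j=m_S$ for all $j\in S$: the equality case of Observation \ref{approxobs} gives $\mathcal{A}(R)=\mathcal{G}(R)$, so by Theorem \ref{RSvalidthm} the set $\mathcal{A}(R)$ is exactly the set of coefficient vectors giving valid inequalities of the form (\ref{RStype}). Consequently, solving (\ref{RSsepopt}) produces either a certificate of membership (when $y^*+LP^*\geq 0$) or the most violated cut of type (\ref{RStype}). Lemma \ref{RSprojlemma} rewrites (\ref{RSsepopt}) as the equivalent LP
\[
\min\Bigl\{\sum_{j\in R}\delta_jz^*_j+\sum_{j\in S}\Delta_j(1-z^*_j)-h:(\delta,\Delta,h,(\gamma^k),(\rho^k))\in \Theta(R)\Bigr\},
\]
whose objective involves only the projected variables. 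Since $\Theta(R)$ is described by (\ref{thetaconst1})--(\ref{thetaconst3}), having $O(n^2)$ variables and $O(n^2)$ constraints with polynomially bounded encoding, this LP is solvable in polynomial time by any standard LP solver, completing the argument.

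The hardest aspect, if any, is merely bookkeeping: one must verify that the objective of (\ref{RSsepopt}) really does depend only on the projection coordinates so that the reformulation over $\Theta(R)$ is equivalent, and that the extracted optimal $(\delta^*,\Delta^*,h^*)$ lies in $\mathcal{A}(R)$ as required. Both points follow immediately from the structure of $\Theta(R)$ and the definition of projection, so no genuine obstacle is expected.
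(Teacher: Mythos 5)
Your proposal is correct and follows essentially the same route as the paper: feasibility for (\ref{RSsepopt}) puts $(\delta^*,\Delta^*,h^*)$ in $\mathcal{A}(R)\subset\mathcal{G}(R)$, Theorem \ref{RSvalidthm} gives validity, the sign of $y^*+LP^*$ gives the violation, and Lemma \ref{RSprojlemma} together with $\mathcal{A}(R)=\mathcal{G}(R)$ in the equal-weights case gives the polynomial-time LP over $\Theta(R)$. The only nitpick is your parenthetical that $y^*+LP^*\geq 0$ is a ``certificate of membership'': it only certifies that no inequality of type (\ref{RStype}) is violated, not that $(y^*,z^*)\in\mathrm{conv}(\mathcal{Q})$, but this is not needed for the statement being proved.
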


\begin{proof} Suppose that $y^*+LP^*< 0$ and that $(\delta^*,\Delta^*,h^*)$ is an optimal solution to $(\ref{RSsepopt})$. Then, by Lemma \ref{RSprojlemma}, we know that $(\delta^*,\Delta^*,h^*)\in \mathcal{A}(R)\subset \mathcal{G}(R)$, and so by Corollary \ref{RSvalidthm}, $y+\sum_{j\in R}\delta^*_jz_j+\sum_{j\in S}\Delta^*_j(1-z_j)\geq h^*$ is a valid inequality for $\text{conv}(\mathcal{Q})$. Since $y^*+LP^*<0$, it follows that $(y^*,z^*)$ violates this inequality and so $(y^*,z^*)\notin \text{conv}(\mathcal{Q})$. When $a_j=m_S$ for all $j\in S$, we have that $\mathcal{G}(R)=\mathcal{A}(R)= \text{proj}_{\mathbb{R}^R\times \mathbb{R}^S\times \mathbb{R}}\Theta(R)$, and so by solving the linear program (\ref{RSsepopt}) (in polynomial time), one can separate over all inequalities of the type (\ref{RStype}).\end{proof}

Observe that the above algorithm yields a heuristic separation algorithm over all inequalities of the type $$y+\sum_{j\in [n]}\alpha_jz_j\geq \beta~~~~\forall~\alpha\in \mathbb{R}^n:\alpha_j\geq 0 \text{ if and only if } j\in R,$$ for a fixed subset $R$ of $[n]$. Furthermore, this separation algorithm is exact when $a_i=m_S>0$ for all $i\in S$.

\section{Conclusion}\label{concl}

In this paper, our main purpose is to recognize some structural properties of the convex hull of the mixing set subject to a knapsack constraint arising in chanced-constrained programming. We start off by characterizing the set of all the valid inequalities for this polyhedron. This characterization helps us in two ways. Firstly, it helps us find a new class of explicit facet-defining inequalities that subsumes the class of strengthened star-inequalities, which were the {\it only} known explicit class previously known for the general knapsack constraint. Secondly, it helps us in finding a polynomial time heuristic separation algorithm for the polyhedron. We also give necessary conditions for the facet-defining inequalities of the polyhedron.

A complete characterization of the facet-defining inequalities of the convex hull of the mixing set subject to a knapsack constraint arising in chanced-constrained programming remains an open problem. We also intend to perform computational experiments with the proposed inequalities to measure their effectiveness.

\end{document}